\DeclareMathOperator{\pnt}{\raise 0.5mm \hbox{\large\textbf{.}}}
\newcommand{\note}[2][ ]{}
\newtheorem{theorem}{Theorem}
\newtheorem{lemma}[theorem]{Lemma}
\newtheorem{corollary}[theorem]{Corollary}
\newtheorem{conjecture}[theorem]{Conjecture}
\newtheorem{alphconj}{Conjecture}
\theoremstyle{definition}
\newtheorem{definition}[theorem]{Definition} 
\newtheorem{remark}[theorem]{Remark}
\newtheorem{example}[theorem]{Example}
\newtheorem{question}[theorem]{Question}
\newtheorem{problem}[theorem]{Problem}
\begin{document}
\title[Parity of the coefficients of certain eta-quotients, III]{Parity of the coefficients of certain eta-quotients, III: two special classes}
\author{William J. Keith and Fabrizio Zanello} \address{Department of Mathematical Sciences\\ Michigan Tech\\ Houghton, MI  49931-1295}
\email{wjkeith@mtu.edu, zanello@mtu.edu}
\thanks{2020 {\em Mathematics Subject Classification.} Primary: 11P83; Secondary:  05A17, 11P84, 11P82, 11F33.\\\indent 
{\em Key words and phrases.} Partition function; density odd values; singular overpartition; partition identity; modular form modulo 2; eta-quotient; eta-power; parity of the partition function.}

\maketitle

\begin{abstract}
We continue a series of papers studying the parity of families of eta-quotients, which provide implications for the parity of the partition function as well as an overarching conjecture on related $q$-series.  The present article focuses on two classes. One consists of eta-quotients of the form $f_t^3/f_1$, a distinguished case of Andrews' singular overpartitions that has recently attracted attention among researchers. In addition, we investigate the parity of certain pure eta-powers $f_1^t$, appending new results to known density theorems.
\end{abstract}

\section{Introduction and Discussion of the Results and Conjectures}


The \emph{density} of a property $P$ of elements of a set $S \subseteq \mathbb{N}$ (or \emph{relative density}, if $S$ is not $\mathbb{N}$) is $$\lim_{n \rightarrow \infty} \frac{1}{\vert \{ s \leq n : s \in S\} \vert} \vert \{ s \leq n : s \in S, s \text{ has property } P\} \vert ,$$ if this limit exists. If the density of a property is 0, the sequence is \emph{lacunary} with respect to the property.  (A lacunary sequence without further specification is a sequence $\{ a_n \} \vert_{n=0}^\infty$ such that the $a_n$ are almost all 0.) Because the existence of many of the limits of interest in this paper has not yet been established, in order to avoid heavy repetitions we may sometimes implicitly assume existence.

Denote $f_t := \prod_{k=1}^\infty (1-q^{kt})$.  Some of the tools available in this subject apply to \emph{eta-quotients}, which are (up to a shift by a power of $q$) finite quotients of the form $\frac{\prod_{i=1}^uf_{\alpha_i}^{r_i}}{\prod_{i=1}^tf_{\gamma_i}^{s_i}}$, where the integers $\alpha_i$ and $\gamma_i$ are positive and all distinct, and $r_i, s_i > 0$.

The \emph{partition function} $p(n)$ counts the number of nonincreasing whole number sequences that sum to $n$.  For example, $\{(4), (3,1), (2,2), (2,1,1), (1,1,1,1)\}$ is the set of partitions of 4, and hence $p(4) = 5$.  The generating function of $p(n)$ is \cite{Andr} $$\sum_{n=0}^\infty p(n) q^n = \frac{1}{f_1}.$$ A longstanding open question regarding $p(n)$, widely believed to be true but considered horrendously difficult, is the \emph{Parkin-Shanks conjecture} \cite{PaSh}: namely, that the density of the odd partition numbers exists and equals 1/2.  This conjecture is the chief motivating question behind this series of papers, as well as much other work in partition theory.  At present, it is not even known that such density exists, nor, assuming it does, that it differs from 0 or 1; i.e., that a positive proportion of the partition numbers is even, or that a positive proportion is odd.

For the remainder of this paper, when we speak of the density of a sequence -- frequently, the coefficients of a power series over an arithmetic progression in the whole numbers -- unless otherwise qualified we refer to the relative density of its odd values. When we say of two power series, $f(q) = \sum_{n=n_1}^\infty a(n) q^n$ and $g(q) = \sum_{n=n_2}^\infty b(n) q^n$, that $f(q) \equiv g(q)$, we will mean $a(n) \equiv b(n) \pmod{2}$ for all $n$.

As a measure of the unpredictability of the parity of $p(n)$, it is known (by Radu \cite{RaduNoEvens}, completing work of Ono \cite{Ono2} and previous authors) that there exists no arithmetic progression $An+B$, $A>0$, such that $p(An+B)$ is always even or always odd.  This is in stark contrast to the guaranteed existence, for any $m \geq 5$, $\gcd(m,6)=1$, of progressions $p(An+B) \equiv 0 \pmod{m}$ (see \cite{Ahlgren1}).

However, for some eta-quotients closely related to the partition function, much more can be said. Further, facts concerning particular eta-quotients potentially have implications for the partition function itself.  Many such implications have been established in the other papers of this series \cite{KZ, KZ2}, as well as previous research with Judge \cite{JKZ, JZ} and other work. For example, in \cite{JKZ}, it was shown that if the $t$-\emph{multipartitions} generated by $\sum_{n=0}^\infty p_t(n)q^n = 1/f_1^t$ have positive density $\delta_t$, for any $t \in \{5,7,11,13,17,19,23,25\}$, then so does $p(n)$ (i.e., $\delta_1 > 0$). This list was later extended by Chen \cite{Chen} and the second author \cite{Zanello} in much greater generality: 

\begin{theorem}[\cite{Zanello}, Theorem 4]\label{ZanDensities} \phantom{.}
\begin{enumerate}
\item If there exists an integer $t \equiv \pm 1 \pmod{6}$ such that $\delta_t > 0$, and all densities $\delta_i$ exist for $i \leq t$, $i \equiv \pm 1 \pmod{6}$, then $\delta_1 > 0$.
\item If there exists an integer $t \equiv 3 \pmod{6}$ such that $\delta_t > 0$, and all densities $\delta_i$ exist for $i \leq t$, $i \equiv 3 \pmod{6}$, then $\delta_3 > 0$.
\end{enumerate}
\end{theorem}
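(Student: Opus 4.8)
The plan is to work entirely modulo $2$ and to exploit the single identity $f_1^2\equiv f_2\pmod 2$, which propagates to $f_1^{2s}\equiv f_2^{\,s}=\big[f_1^s\big](q^2)\pmod 2$ for every $s$. Since each of $f_1$, $f_1^2=f_2$, and $f_1^3$ is \emph{lacunary} modulo $2$ — supported, by Euler's pentagonal theorem and Jacobi's identity, on the generalized pentagonal numbers $(3j^2+j)/2$ and on the triangular numbers $k(k+1)/2$ respectively — the descent multiplier $f_2^{\,s}=[f_1^s](q^2)$ is a lacunary series in $q^2$ for the values $s\in\{1,2,3\}$ that we need. Writing $A_r:=\sum_n p_r(n)q^n=1/f_1^r$, the governing relation is
\[
\frac{1}{f_1^{\,t-2s}}\;\equiv\;f_2^{\,s}\cdot\frac{1}{f_1^{\,t}}\pmod 2 ,\qquad\text{i.e.}\qquad A_{t-2s}\equiv \big[f_1^s\big](q^2)\,A_t .
\]
The step sizes $2s$ are exactly the gaps within each residue class modulo $6$: the class $\pm1\pmod 6$ is traversed by steps $2$ and $4$ (multiplying by $f_2$ and $f_4=[f_1^2](q^2)$, both pentagonal), while the class $3\pmod 6$ is traversed by steps of $6$ (multiplying by $f_2^3=[f_1^3](q^2)$, triangular). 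Thus the theorem reduces to the one-step assertion that, within a class, $\delta_t>0\Rightarrow\delta_{t-2s}>0$; iterating down to the bottom of the class yields $\delta_1>0$ in part (1) and $\delta_3>0$ in part (2), and the hypothesis that \emph{all} intermediate $\delta_i$ ($i\le t$, $i$ in the class) exist is precisely what licenses each individual step.

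To extract the one-step comparison I would $2$-dissect. Writing $A_t=E(q^2)+q\,O(q^2)$ and using that $f_2^{\,s}$ is a series in $q^2$, the relation $A_{t-2s}\equiv [f_1^s](q^2)A_t$ splits into even and odd parts $[f_1^s E]$ and $[f_1^s O]$, whence
\[
\delta_{t-2s}=\tfrac12\,\delta\!\big(f_1^s E\big)+\tfrac12\,\delta\!\big(f_1^s O\big),\qquad \delta_t=\tfrac12\,\delta(E)+\tfrac12\,\delta(O).
\]
Because all the densities in sight are assumed to exist, these are honest limits, and the dissection converts the problem into finitely many instances of the \emph{atomic} question: does multiplication by the pentagonal theta series $f_1$ (for part (1)) or by the triangular theta series $f_1^3$ (for part (2)) preserve positivity of density? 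Assembling the resulting identities over successive dissections produces a finite system of linear relations among the $\delta_i$, and the goal is to show this system forces $\delta_1$ (resp. $\delta_3$) to be bounded below by a positive multiple of $\delta_t$.

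The hard part will be exactly this atomic step, and it cannot be done by soft counting: multiplication by a lacunary series can annihilate a positive density, as $f_1\cdot(1/f_1)=1$ shows, so neither a union bound on supports nor subadditivity of density yields a lower bound. The positivity must instead come from the arithmetic of the specific functions involved. Each $A_r=1/f_1^r$ is, after the standard substitution inserting the appropriate fractional $q$-power, the reduction modulo $2$ of the weakly holomorphic modular form attached to $\eta(24z)^{-r}$, and the even/odd components produced by dissection are again such reductions (images under $U_2$). The whole family therefore lives in a finite-dimensional space of modular forms modulo $2$ on which $U_2$ and the Hecke operators act with controlled — in particular locally nilpotent — structure, and it is this finiteness that rules out the degenerate cancellation of the example $f_1\cdot(1/f_1)$ and forces the density to survive multiplication by the pentagonal or triangular theta. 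The split into the residue classes $\pm1$ and $3$ modulo $6$ reflects the decomposition of these theta series by square classes modulo $24$, which is why the two parts are proved by the same mechanism but with pentagonal versus triangular atoms. Establishing this modular non-degeneracy quantitatively — a bound $\delta_{t-2s}\ge c\,\delta_t$ with $c>0$ — is where the real work lies; the existence hypotheses then propagate it down the class to the stated conclusion.
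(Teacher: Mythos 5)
First, note that the paper does not prove this statement at all: Theorem \ref{ZanDensities} is quoted verbatim from \cite{Zanello} (Theorem 4 there, building on \cite{JKZ} and \cite{Chen}), so there is no in-paper argument to compare yours against. Judged on its own terms, your proposal is not a proof. The setup is fine --- the congruence $1/f_1^{t-2s} \equiv f_2^s\cdot(1/f_1^{t}) \pmod{2}$ and the observation that steps of $2$ and $4$ (resp.\ $6$) traverse the class $\pm 1 \pmod{6}$ (resp.\ $3 \pmod 6$) --- but you then reduce everything to the ``atomic'' claim that multiplication by the pentagonal or triangular theta series preserves positive odd density within this family, and that claim is essentially the entire content of the theorem. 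You yourself observe that it cannot follow from any support-counting argument (your example $f_1\cdot(1/f_1)=1$ shows multiplication by a lacunary series can annihilate density), and the replacement you offer --- that the relevant reductions live in a space of modular forms mod $2$ on which $U_2$ and the Hecke operators act locally nilpotently, and that this ``forces the density to survive'' --- is an appeal to machinery, not an argument. Local nilpotency (Nicolas--Serre, Ono--Taguchi) is precisely the structure that has so far produced no positive-density conclusion for any of these series; it is not even known that $\delta_1$ exists, let alone that it is positive. No quantitative bound $\delta_{t-2s}\ge c\,\delta_t$ is derived or sketched, so the chain of one-step descents never gets off the ground.

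It is also worth flagging that the argument in \cite{Zanello} is not a chain of one-step implications $\delta_t>0\Rightarrow\delta_{t-2s}>0$; no such monotonicity statement is known, and your own counterexample suggests why one should be suspicious of it. The proof there is a strong induction over the entire residue class, in which the hypothesis that every $\delta_i$ ($i\le t$, $i$ in the class) exists is used to run a global density bookkeeping across all the indices simultaneously, rather than to license individual descents. A smaller technical point: your dissection formula $\delta_{t-2s}=\tfrac12\,\delta(f_1^s E)+\tfrac12\,\delta(f_1^s O)$ presupposes that the densities of the two dissected halves exist separately, which is not among the stated hypotheses and would need to be justified or added.
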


\begin{remark} It perhaps bears some investigation as to why the two cases are so separate, in that it does not seem to follow in the same way that $\delta_{3j} > 0$ implies $\delta_1 > 0$, for any $j$.
\end{remark}

The $t$-\emph{regular} partitions are those in which no part is divisible by $t$.  Their generating function is $$\frac{f_t}{f_1} := \sum_{n=0}^\infty b_t(n) q^n.$$  Denote the density of the odd values of $b_t(n)$ by $\delta^{[t]}$.  For many odd values of $t$, there are known progressions in which $b_t(An+B) \equiv 0 \pmod{2}$, putting an upper limit on $\delta^{[t]}$.  For instance, $b_{5}(10n+6) \equiv 0 \pmod{2}$ and $b_5(10n+8) \equiv 0 \pmod{2}$, implying $\delta^{[5]} \leq 0.8$.

In this paper we consider the parity of the coefficients of two classes of eta-quotients: those of the form $f_t^3 / f_1$, and pure eta-powers $f_1^t$.  The remainder of this introduction will list the theorems to be shown.  A brief list of necessary background facts is given in Section \ref{background}, and the proofs of the claims in the following sections.  We conclude with some remarks and open questions.

We note that all theorems shown in this paper are consistent with our ``master conjecture'' on the parity of eta-quotients, first published in \cite{KZ}, which in essence posits that any given eta-quotient is dissectable into arithmetic progressions whose density is ``either 0 or 1/2.''  We defer its formal statement to Conjecture \ref{mainconj} in the final section.

\subsection{Results for $f_t^3 / f_1$}

For any whole number $t$, write $t=2^\alpha t_0$, with $t_0$ odd. A primary object of interest in this paper is $$G_t(q) = \frac{f_t^3}{f_1} := \sum_{n=0}^\infty g_t(n) q^n.$$  The coefficients of this eta-quotient are equivalent modulo 2 to those of the special case $\overline{C}_{4t,t}$ of the $(k,i)$-\emph{singular overpartitions} whose parity was first studied by Andrews and other authors (\cite{AndrCKI, ArichetaEtAl, BarmanSingh2}).  Denote the density of the odd values of $g_t(n)$ by $\delta^{(t)}$.

In Section \ref{gtproofs} we study the parity of $g_t(n)$ in greater detail. In particular, one of our results in this paper will be a class of relations between certain $\delta^{[t]}$ and $\delta^{(j)}$.  Our main results are the following.

\begin{theorem}\label{smallt0} Let $t = t_0 \cdot 2^\alpha$, with $t_0$ odd.  If $3 \cdot 2^\alpha \geq t_0$, then $\delta^{(t)} = 0$.
\end{theorem}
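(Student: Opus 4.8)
The plan is to reduce everything modulo $2$ to a thin, theta-type $q$-series. First I would exploit the collapse $f_{2m}\equiv f_m^2\pmod 2$ (immediate from $(1-q^k)^2\equiv 1-q^{2k}$), iterated $\alpha$ times, to strip the factor $2^\alpha$ off the index: since $t=t_0\cdot 2^\alpha$ we get $f_t\equiv f_{t_0}^{2^\alpha}$ and hence
$$G_t=\frac{f_t^3}{f_1}\equiv \frac{f_{t_0}^{3\cdot 2^\alpha}}{f_1}\pmod 2.$$
I would then expand the numerator via Jacobi's identity $f_s^3=\sum_{k\ge 0}(-1)^k(2k+1)\,q^{s\,k(k+1)/2}$; reducing mod $2$ and using that $x\mapsto x^{2}$ is the Frobenius on $\mathbb{F}_2[[q]]$ yields $f_{t_0}^{3\cdot 2^\alpha}\equiv \sum_{k\ge 0}q^{t\,k(k+1)/2}$. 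Combined with the classical $\sum_{k\ge 0}q^{s\,k(k+1)/2}=f_{2s}^2/f_s$ and $f_{2s}^2\equiv f_{4s}$, this puts $G_t$ into the compact shape
$$G_t\equiv \Big(\sum_{k\ge 0}q^{t\,k(k+1)/2}\Big)\frac{1}{f_1}\equiv \frac{f_{4t}}{f_t\,f_1}\pmod 2,$$
which is the form I would actually analyze.

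The goal is then to show that, under the hypothesis $3\cdot 2^\alpha\ge t_0$, this $q$-series is supported modulo $2$ on a set of density zero. My approach would be to seek an exact theta representation for $f_{4t}/(f_tf_1)$: feeding the quotient through the Jacobi triple product should rewrite it, modulo $2$, as a finite combination of unary theta series $\sum_{k\in\Z}q^{Q(k)}$ with $Q$ positive definite. For the two extreme cases this is transparent — $t=1$ gives $f_1^2\equiv f_2=\sum_k q^{2\cdot k(3k-1)/2}$, and $t=3$ collapses to $\sum_{k\in\Z}q^{k(3k+2)}$ — and in each case the number of represented integers up to $N$ is $O(\sqrt N)$, forcing density $0$. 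The role of the inequality $3\cdot 2^\alpha\ge t_0$ is to pin down exactly the arithmetic range in which such a theta collapse is available; verifying that the hypothesis is both sufficient for and essentially delimits this collapse is what I would spend the most effort on.

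The main obstacle is the division by $f_1$. Because $1/f_1=\sum_n p(n)q^n$ is dense modulo $2$, one cannot conclude that multiplying the thin numerator $\sum_k q^{t\,k(k+1)/2}$ by $1/f_1$ leaves a thin series; any density-zero statement must come from a genuine, exact cancellation modulo $2$ rather than from a support-counting estimate. Consequently the crux is to establish the closed theta identity for $f_{4t}/(f_tf_1)$ as an honest $q$-series identity — assembled from the Jacobi triple product together with the $2$-adic collapses $f_m^2\equiv f_{2m}$ — and to confirm that it persists precisely under the stated bound on $\alpha$ and $t_0$. An alternative, more structural route would be to clear denominators, view the result as a modular form modulo $2$ of explicit weight and level, and invoke Serre's dichotomy (a mod-$2$ form is either lacunary or has a positive proportion of nonzero coefficients) to reduce to checking that $G_t$ lies in the lacunary part; but locating the form there again hinges on the same cancellation, so I expect the theta-identity computation to be the decisive step.
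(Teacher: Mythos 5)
There is a genuine gap, and it sits exactly where you say you would ``spend the most effort.'' Your opening reduction $G_t \equiv f_{t_0}^{3\cdot 2^{\alpha}}/f_1 \pmod 2$ is correct and is in fact essentially the whole content of the paper's argument: the paper then simply invokes the lacunarity criterion of Cotron--Michaelsen--Stamm--Zhu (Theorem \ref{cot}), whose hypothesis $\sum r_i/\alpha_i \ge \sum s_i\gamma_i$ applied to the quotient $f_{t_0}^{3\cdot 2^{\alpha}}/f_1$ reads $3\cdot 2^{\alpha}/t_0 \ge 1$ --- precisely the assumed inequality --- and concludes $\delta^{(t)}=0$. Your subsequent manipulation, however, goes in a circle: since $f_{4t}\equiv f_t^4$, the ``compact shape'' $f_{4t}/(f_t f_1)$ is identically $f_t^3/f_1 = G_t$ modulo $2$, an unconditional rewriting valid for every $t$ that makes no use of the hypothesis. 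At that point you are back where you started, and the plan reduces to the unproved assertion that an ``exact theta representation'' exists precisely when $3\cdot 2^{\alpha}\ge t_0$. No mechanism is identified for how the inequality would enter such an identity, and the proposed target (a finite combination of unary theta series) is too strong even in easy cases: already $G_2 \equiv f_1^5 \equiv f_1 f_4$ is a \emph{product} of two unary theta series, lacunary by Landau (Theorem \ref{Landaulac}) rather than by being a theta series itself, and for larger admissible $t$ (e.g.\ $t=20$, where one must handle $f_5^{12}/f_1$) no closed theta or Landau-type form is available by inspection; indeed the paper needs the nontrivial identities of Hirschhorn and Xia (Lemma \ref{f1f3-f1f5}) to achieve such collapses even for a handful of special $t$ in Theorem \ref{equaldensities}.

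The route you dismiss in your final sentences --- clearing denominators and invoking Serre-type lacunarity of integer-weight modular forms modulo $2$ --- is essentially how CMSZ prove their criterion, and it does not hinge on any explicit cancellation or theta identity; that is the missing step in your proposal. As written, your argument establishes the theorem only for the cases $t=1$ and $t=3$ that you compute explicitly.
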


While we include this here for context, Theorem \ref{smallt0} immediately follows from a result of Cotron \emph{et al.} \cite{CMSZ} (see also \cite{ArichetaEtAl}, Theorem 1).

We next produce a number of arithmetic progressions on which various $g_t(An+B)$ are even.  As discussed before, this puts an upper bound on $\delta^{(t)}$, should such density exist.  

Some progressions have been produced for small $t$, along with other results, by earlier authors (\cite{ArichetaEtAl, BarmanSingh2}).  Naika and Gireesh \cite{NaikaGireesh} proved infinite families of congruences modulo 2 for $g_4$ and $g_7$.  Aricheta \cite{Aricheta2} proved the existence of infinite families of congruences for $g_1$, $g_2$, and $g_3$.  However, it is easy to notice that $g_1$ is of the same parity as $f_2$, $g_2$ of the same parity as $f_1^5 \equiv f_1 f_4$, while $g_3$ is the generating function for the 3-core partitions, which are odd exactly at the octagonal numbers $3m^2-2m$, $m \in \mathbb{Z}$ (Robbins \cite{Robbins}).  An elementary analysis of quadratic residues can therefore identify infinite families of arithmetic progressions avoided by these functions; for instance, $f_2$ has even coefficients at $11n+1$ since $m(3m-1)$ avoids the residue class 1 modulo 11.

The following theorem shows the existence of progressions that are ``inherited'' from known results for the $t$-regular partitions.

\begin{theorem}\label{inherited} We have that $g_t(2tn+B) \equiv 0$ for all $n \geq 0$ for the following values of $t$ and $B$.
{\ }\\
\begin{center}\begin{tabular}{|c|c|}
\hline $t$ & $B$ \\
\hline 5 & 2, 6 \\
\hline 7 & 7, 9, 13 \\
\hline 11 & 2, 8, 12, 14, 16 \\
\hline 13 & 2, 10, 16, 18, 20, 22 \\
\hline 19 & 2, 8, 10, 20, 24, 28, 30, 32, 34 \\
\hline
\end{tabular}
\end{center}
\end{theorem}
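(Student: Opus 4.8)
The plan is to reduce the claimed vanishing of $g_t$ to the corresponding vanishing of the $t$-regular partition function $b_t$, using the elementary congruence $f_t^2 \equiv f_{2t} \pmod 2$. Since $(1-q^{tk})^2 \equiv 1-q^{2tk} \pmod 2$, we have $f_t^2 \equiv f_{2t}$, and therefore
\[
G_t = \frac{f_t^3}{f_1} = f_t^2\cdot\frac{f_t}{f_1} \equiv f_{2t}\cdot\frac{f_t}{f_1} \pmod 2 .
\]
By Euler's pentagonal number theorem applied to $f_{2t}=\prod_{k\ge 1}(1-q^{2tk})$, the series $f_{2t}$ is supported on the exponents $2t\,p_j$, where $p_j=j(3j-1)/2$ runs over the generalized pentagonal numbers $0,1,2,5,7,12,\dots$; in particular $f_{2t}$ is a power series in $q^{2t}$ with constant term $1$. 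Extracting the coefficient of $q^{2tn+B}$ on both sides (the signs from the pentagonal theorem being irrelevant modulo $2$) yields
\[
g_t(2tn+B) \equiv \sum_{j:\, p_j\le n} b_t\bigl(2t(n-p_j)+B\bigr) \pmod 2 .
\]
Since each $B$ in the table satisfies $0\le B<2t$, every argument on the right is congruent to $B$ modulo $2t$, so the parity of $g_t$ along the progression $2tn+B$ is governed entirely by the parity of $b_t$ along the same progression $2tm+B$.

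From this identity the theorem follows immediately once the input congruences are in hand: if $b_t(2tm+B)\equiv 0 \pmod 2$ for every $m\ge 0$, then each summand above vanishes, whence $g_t(2tn+B)\equiv 0$ for every $n\ge 0$. (Conversely, isolating the $p_0=0$ term and inducting on $n$ shows the two conditions are in fact equivalent.) Thus the problem reduces entirely to verifying, for each pair $(t,B)$ in the table, the $t$-regular congruence $b_t(2tm+B)\equiv 0 \pmod 2$ for all $m$ — precisely the ``known results for the $t$-regular partitions'' from which the progressions are inherited. For instance, the $t=5$ rows correspond to the fact that $b_5$ is identically even on the residue classes $2$ and $6$ modulo $10$.

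The routine part of the argument is the transfer displayed above; the substantive content lies in the regular-partition congruences $b_t(2tm+B)\equiv 0$ themselves, which I would take from (or re-derive via) the existing literature on the parity of $t$-regular partitions. The main obstacle is therefore twofold. First, one must assemble the complete list of residues $B$ modulo $2t$ on which $b_t$ is identically even — a list that grows with $t$ (five residues for $t=11$, six for $t=13$, nine for $t=19$) and must be justified for all $m$, not merely checked on an initial segment of coefficients. Second, for those $t$ where a clean citation is unavailable, one must prove each $b_t(2tm+B)\equiv 0$ directly, for example by reducing $f_t/f_1$ modulo $2$ to a manageable eta-quotient or theta expansion and reading off the relevant arithmetic progression through an analysis of the representable exponents, in the same spirit as the quadratic-residue arguments already invoked for $g_1,g_2,g_3$ in the discussion above. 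Once these finitely many regular-partition congruences are established, the displayed identity delivers every row of the table simultaneously.
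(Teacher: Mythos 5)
Your proposal is correct and matches the paper's proof essentially verbatim: the paper also writes $G_t \equiv f_{2t}\cdot \frac{f_t}{f_1}$, expands $f_{2t}$ by the pentagonal number theorem to get $g_t(n) = b_t(n) - b_t(n-2t) - b_t(n-4t) + \cdots$, and then inherits each row of the table from the known even progressions $b_t(2tn+B)\equiv 0$ (cited from the table in \cite{KZ}). The only difference is that you flag the sourcing of those regular-partition congruences as the remaining burden, which the paper discharges purely by citation.
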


{\ }\\
The theorem for $t=5$ and $t=7$ is the $\alpha = 0$ case of results by Pore and Fathima \cite{PoreFathima} and Li and Yao \cite{LiYao}, respectively.  The remainder are new.

Other congruences may be proved as well, frequently by use of the theory of modular forms.  Two examples follow. 

\begin{theorem}\label{g19} $g_{19}(16n+11) \equiv 0 \pmod{2}$.
\end{theorem}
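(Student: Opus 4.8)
The plan is to first collapse $G_{19}$ modulo $2$ to a product of theta functions over an eta-power, and then to extract the progression $16n+11$ and recognize the result as the reduction of a modular form to which Sturm's bound applies.

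\textbf{The reduction.} Using $(1-q^k)^2\equiv 1-q^{2k}\pmod 2$, i.e. $f_a^2\equiv f_{2a}$, together with Gauss' identity $\psi(q):=\sum_{n\ge 0}q^{n(n+1)/2}=f_2^2/f_1$, one gets $f_1^3=f_1f_1^2\equiv f_1f_2\equiv f_2^2/f_1=\psi(q)$ and $1/f_1=\psi(q)/f_2^2\equiv \psi(q)/f_4$, all modulo $2$. Substituting $q\mapsto q^{19}$ in the first relation gives $f_{19}^3\equiv\psi(q^{19})$, so that
$$G_{19}=\frac{f_{19}^3}{f_1}\equiv\frac{\psi(q)\,\psi(q^{19})}{f_4}\pmod 2.$$
Since $1/f_4$ is a power series in $q^4$, every term of $G_{19}$ with exponent $\equiv 3\pmod 4$ — in particular every $16n+11$ — arises from a term of $\psi(q)\psi(q^{19})$ whose exponent is $\equiv 3\pmod 4$, convolved with the partition series in $q^4$.

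\textbf{Modular interpretation.} Writing $T_n=n(n+1)/2$, the series $\psi(q)\psi(q^{19})=\sum_{a,b\ge 0}q^{T_a+19T_b}$ has $q^m$-coefficient counting, modulo $2$, the solutions of $8m+20=(2a+1)^2+19(2b+1)^2$; up to the standard $q^{1/8}$ rescaling it is thus the theta series of the binary form $x^2+19y^2$ on odd vectors, a weight-$1$ form on a congruence subgroup. The eta-quotient $\eta(19\tau)^3/\eta(\tau)$ attached to $G_{19}$ is likewise a weight-$1$ object, and I would realize $G_{19}\pmod 2$ as the reduction of a holomorphic modular form on $\Gamma_0(N)$ with $19\mid N$ and a $2$-power in $N$ to accommodate both the $f_4$ and the modulus $16$, clearing the eta-multiplier by the Gordon--Hughes--Newman/Ligozat criteria and multiplying by an auxiliary power of $f_1$ (using $f_1^{2^j}\equiv f_{2^j}$ to keep the weight controlled modulo $2$).

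\textbf{Extraction and finite check.} I would then apply the operator projecting onto exponents $\equiv 11\pmod{16}$ — a composition of $U_2$'s, equivalently $U_{16}$ with a twist — which preserves modularity and lands in a finite-dimensional space of modular forms modulo $2$ of explicitly bounded weight and level. In such a space a form vanishes identically precisely when its $q$-expansion vanishes up to the Sturm bound, so the assertion $g_{19}(16n+11)\equiv 0$ reduces to verifying that finitely many coefficients are even, a routine if tedious computation.

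\textbf{Main obstacle.} The crux is the bookkeeping in the last two steps. The eta-quotient $\eta(19\tau)^3/\eta(\tau)$ has non-integral order $7/3$ at infinity and a nontrivial multiplier, so genuine holomorphicity at all cusps, and control of the level so that Sturm's bound stays manageable, require care; keeping the weight from ballooning under the iterated $U_2$ is the delicate point. An elementary alternative, sidestepping the full machinery, would be to work directly with $\psi(q)\psi(q^{19})/f_4$ on the class $11\bmod 16$ and show that the resulting weighted count of representations by $x^2+19y^2$, convolved with the partition function modulo $2$, is even — for instance via an involution on solutions or a quadratic-residue obstruction modulo a small auxiliary prime, in the spirit of the $f_2$, $g_1$, $g_3$ analyses already noted.
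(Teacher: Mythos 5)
Your strategy coincides with the paper's: multiply $G_{19}$ by an auxiliary eta-quotient that reduces mod $2$ to a power series in $q^{16}$ with odd constant term (so that the coefficients on $16n+11$ are recoverable from the product by a triangular recurrence), check that the product is a holomorphic modular form on some $\Gamma_0(N)$, apply $U(16)$, and invoke Sturm's bound to reduce the congruence to a finite coefficient check. The paper carries this out with the explicit form $D(q) = q^{21} f_8^{18} f_{16}^{19} f_{38} f_{19}/f_1 \equiv G_{19}(q)\cdot q^{21} f_{16}^{28} \pmod 2$, which is holomorphic of weight $252$ on $\Gamma_0(2432)$ with Sturm bound $80640$, and then verifies $g_{19}(16n+11)\equiv 0$ for $n\le 80640$ by computing $g_{19}(n)$ for all $n\le 1290251$.

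The gap is that you stop exactly where the content begins. What you call the ``main obstacle'' --- exhibiting a concrete holomorphic form of controlled weight and level whose mod-$2$ reduction encodes $G_{19}$ on the progression --- is not a bookkeeping afterthought but the entire substance of the argument, since the theorem is ultimately established by a computation whose length is dictated by that choice. No candidate form is written down, holomorphicity at the cusps is not verified, no level or Sturm bound is computed, and the finite verification is not performed; every step is phrased conditionally, so as written nothing is proved. Your opening reduction $G_{19}\equiv \psi(q)\psi(q^{19})/f_4 \pmod 2$ is correct (it is just $f_{19}^3 f_1^3/f_1^4$ with Jacobi's identity and $f_1^4\equiv f_4$), and the proposed elementary alternative via representations by $x^2+19y^2$ is plausible in spirit, but that route is likewise left entirely unexecuted and there is no evident involution or residue obstruction modulo a small prime that handles the class $11 \bmod 16$.
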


\begin{theorem}\label{g147} $g_{147}(28n+19) \equiv 0 \pmod{2}$.
\end{theorem}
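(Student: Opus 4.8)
The plan is to prove the congruence by passing to modular forms modulo $2$ and reducing to a finite verification via Sturm's bound. Throughout write $t=147=3\cdot 7^2$, so that $G_{147}=f_{147}^3/f_1$. The first step is to realize $G_{147}$ modulo $2$ as (the reduction of) a holomorphic modular form of half-integral weight. Using the elementary congruences $f_1^4\equiv f_4$ and Jacobi's identity $f_1^3\equiv\sum_{k\ge 0}q^{k(k+1)/2}\pmod 2$, one has $1/f_1\equiv f_1^3/f_4$ and hence
\[
G_{147}=\frac{f_{147}^3}{f_1}\equiv\frac{f_1^3 f_{147}^3}{f_4}\pmod 2,
\]
which, up to a rational power of $q$ dictated by the $\eta$-multiplier, is the eta-quotient $\eta(z)^3\eta(147z)^3/\eta(4z)$, a form of weight $5/2$ on $\Gamma_0(N)$ with $N$ divisible by $4$, $3$, and $49$. (Equivalently one may use the factorization $G_{147}=G_3(q^{49})\cdot f_{49}/f_1$, which exhibits the role of $7$.) Before proceeding I would check that, after multiplying by a suitable even eta-product to clear any poles at the cusps, the resulting object is genuinely holomorphic, so that the machinery below applies.

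Next I would isolate the progression $28n+19$. Since $28=4\cdot 7$ and the relevant eta-quotient has its $7$-part governed by $147=3\cdot 7^2$, I would apply the operators $U_4$ and $U_7$ together with the quadratic twists picking out the residue $19$ modulo $28$ (that is, residue $3$ modulo $4$ and residue $5$ modulo $7$). Standard facts about the action of $U_\ell$ and character twists on half-integral weight forms guarantee that the resulting series
\[
H:=\sum_{n\ge 0} g_{147}(28n+19)\,q^n
\]
is again congruent modulo $2$ to a holomorphic modular form, of explicitly computable weight $k$ and level $N'$ still divisible by $4$ and $7$. The key point is that reduction modulo $2$ commutes with these operators, so the coefficients of $H$ are exactly $g_{147}(28n+19)\bmod 2$.

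The final step is to show $H\equiv 0\pmod 2$. By Sturm's theorem (in the form valid for forms on $\Gamma_0(N')$ with character, together with its half-integral weight extension), it suffices to verify that every coefficient of $H$ vanishes modulo $2$ up to the Sturm bound $\left\lfloor \tfrac{k}{12}\,[\mathrm{SL}_2(\Z):\Gamma_0(N')]\right\rfloor$. This reduces the theorem to a finite computation of the $q$-expansion of $G_{147}$ modulo $2$ along the progression, which one checks directly. The same template, with $28$ replaced by the pure power $16=2^4$ and $147$ by $19$, yields Theorem~\ref{g19}; there the argument is entirely $2$-adic and no prime $7$ enters.

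The hardest part is the bookkeeping at the cusps and the size of the level. Because $N'$ carries the factors $4$ and $7$ (and the $3$ and $49$ coming from $t$), the index $[\mathrm{SL}_2(\Z):\Gamma_0(N')]$, and hence the Sturm bound, is large, making the finite verification computationally heavy; moreover one must confirm nonnegativity of the order of the relevant eta-quotient at every cusp of $\Gamma_0(N')$ so that Sturm's bound legitimately applies, and one must track the half-integral weight multiplier system carefully to ensure the $U$- and twist-operators preserve the congruence modulo $2$. Once these technical points are settled, the finite check closes the argument.
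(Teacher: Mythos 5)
Your overall strategy --- realize $G_{147}$ modulo $2$ as a modular form, extract the progression with $U$-operators, and finish with a finite Sturm-bound verification --- is exactly the paper's. The paper's proof (given only as a sketch, by analogy with Theorem \ref{g19}) uses the auxiliary form $D_2(q)=q^{65}\,f_{147}^3 f_{28}^{40}/f_1\,\vert U(7)\vert U(4)$: multiplying by $f_{28}^{40}$, an integral-weight eta-power that is a power series in $q^{28}$, simultaneously clears the poles at the cusps, makes the weight integral, and leaves the coefficient of $D_2$ on each residue class modulo $28$ expressible as a recurrence in the values $g_{147}(28m+r)$ on a single class $r$; the shift $q^{65}$ places the residue $19$ onto the class $0 \bmod 28$, so one application of $U(28)=U(7)U(4)$ isolates the desired progression.

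The one step of yours that would fail as written is the use of ``quadratic twists picking out the residue $19$ modulo $28$.'' A quadratic twist by the character modulo $7$ only separates coefficients according to the value of the Legendre symbol; combining $F$ with its twist (and removing the multiples of $7$) lets you extract the sum over all nonresidue classes $n\equiv 3,5,6\pmod 7$ at once, but not the single class $n\equiv 5\pmod 7$. To isolate one residue class you need either the full set of Dirichlet characters modulo $7$ (leaving the quadratic-character setting) or, as the paper does, a $q$-shift so that the target class becomes $0\bmod 28$ and plain $U$-operators suffice. This is fixable, and the half-integral-weight route you sketch via $G_{147}\equiv f_1^3f_{147}^3/f_4$ can be made to work, but as stated the twisting step does not produce the series $H$ you define. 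The remaining issues you correctly flag as needing verification --- holomorphy at every cusp, the multiplier system, and the size of the Sturm bound --- are precisely the points that the paper's choice of the auxiliary factor $f_{28}^{40}$ is designed to settle.
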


We additionally have the following congruences, which are samples of a collection all of which can be proved with techniques similar to those used in this paper.  We use the common notation $y_a^{-1}$ to mean the inverse of $y$ modulo $a$.

\begin{theorem}\label{longlist} \phantom{.}
\begin{enumerate}
\item If $-6$ is not a quadratic residue modulo a prime $p>3$, then it holds for all $n$ and for any $k \not\equiv 0 \pmod{p}$ that $$g_9(2(p^2n+kp-24_{p^2}^{-1}-1)+1) \equiv 0 \pmod{2}.$$ 
\item If $n$ is not of the form $2(3k^2-2k)$ or $3+11(3k^2-2k)$, then $g_{11}(2n) \equiv 0 \pmod{2}$.  Hence if 22 is a quadratic residue modulo a prime $p$, then at least $(p-1)/2$ even progressions exist of the form $g_{11}(2(p^2n+B)) \equiv 0$.  These occur when $B$ is not of the form $6x^2 - 2 \cdot 3_{p^2}^{-1}$ modulo $p^2$.
\item If $n$ is not of the form $1+15\binom{k+1}{2}+j(3j-1)$, then $g_{15}(4n+2) \equiv 0$.  Hence if $-10$ is not a quadratic residue modulo a prime $p>5$, then for any $k \not\equiv 0 \pmod{p}$ it holds that $$g_{15}(4(p^2n+ k p +1-47 \cdot 24_p^{-1})+2) \equiv 0.$$
\end{enumerate}
\end{theorem}

\begin{example} To illustrate the claims of this theorem, we give the following instances.  An example of clause (1) is that for $k \not\equiv 0 \pmod{13}$, $$g_9(338n+26k+13) \equiv 0.$$  Clause (2) implies many even arithmetic progressions, such as $$g_{11}(118n+k) \equiv 0$$ for $k \in \{2,10,12,\dots,110\}$.  An example of clause (3) is that for $k \not\equiv 0 \pmod{29}$, $$g_{15}(3364n+116k+3222) \equiv 0.$$
\end{example}

The next result provides several interesting identities between $\delta^{(j)}$ and $\delta^{[m]}$, i.e., densities of $(4j,j)$-singular overpartitions and of $m$-regular partitions.

\begin{theorem}\label{equaldensities} The following congruences of $q$-series hold.
\begin{align}G_5 &\equiv f_1^4f_5^2 + q \frac{f_5^8}{f_1^2}; \\
G_5 &\equiv \sum_{n=0}^\infty b_5(2n+1) q^n; \\
G_7 &\equiv f_1^{20} + q f_1^2 f_7^6 + q^6 \frac{f_7^{24}}{f_1^4}; \\
q G_9 &\equiv f_1^2 + \frac{f_3}{f_1}; \\
q G_{10} &\equiv q \frac{f_5^6}{f_1} \equiv f_5 + f_1^5; \\
q^2 G_{18} &\equiv f_1^5 + \frac{f_6}{f_1}.
\end{align}
\end{theorem}

Aricheta \emph{et al.} \cite{ArichetaEtAl} also proved the congruence given in clause (1). The rest are new.

As an immediate corollary, we have the following of equivalences of densities.

\begin{corollary} The following identities hold for the noted densities, should they exist.
\begin{align}\delta^{(5)} &= \frac{1}{2} \delta^{[20]};\\
\delta^{(5)} &= 2 \delta^{[5]};\\
\delta^{(7)} &= \frac{1}{4} \delta^{(14)};\\
\delta^{(9)} &= \delta^{[3]};\\
\delta^{(10)} &= 0;\\
\delta^{(18)} &= \delta^{[6]}.
\end{align}
\end{corollary} 

\begin{remark} \phantom{.}
\begin{enumerate}
\item For many of these density implications, if one density exists, the other does as well, since the remaining terms in the identities of Theorem \ref{equaldensities} are lacunary.
\item The density implication (7) of clause (2) requires the additional fact that $b_5(2n)$ is lacunary modulo 2.  Clause (12), following from (6), uses the lacunarity of $f_1^5$, which follows from \cite{CMSZ} (see Theorem \ref{cot} below).  Indeed, due to this lacunarity, clause (6) gives more, namely that $g_{18}(n-2)$ is almost always of the same parity as $b_6(n)$.
\item We conjecture that $\delta^{[20]} = 1/2$, and that $\delta^{[3]} = 1/4$.  Also, since $\delta^{(t)}$ is at most 1, we have $\delta^{(7)} \leq 1/4$ (always assuming existence).
\item That the density $\delta^{(10)}$ exists, and is exactly 0, does follow from Theorem \ref{equaldensities}, and also from Theorem \ref{smallt0}.
\end{enumerate}
\end{remark}

\begin{theorem}\label{ThreeT} Let $ t = 2^\alpha t_0$, with $t_0$ odd.  If $\frac{t_0}{3} < 2^\alpha$, then assuming existence, $\delta^{(9t)} = \delta^{[3t]}$. Moreover, it almost always holds that $g_{9t}(n) \equiv b_{3t}(n+t)$.
\end{theorem}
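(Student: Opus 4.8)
The plan is to derive the asserted parity identity from clause (4) of Theorem~\ref{equaldensities} by a substitution-and-Frobenius argument, and then to discard the resulting error term by invoking Theorem~\ref{smallt0}. Concretely, I would first convert clause (4) into a denominator-free congruence, propagate it from the base case to general $t$, divide by $f_1$ to land on a relation between the coefficient sequences $g_{9t}$ and $b_{3t}$, and finally show that the error sequence is lacunary precisely in the stated range of $t$.

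First I would clear denominators in clause (4). Multiplying $qG_9 \equiv f_1^2 + f_3/f_1$ by $f_1$ gives the base identity $q f_9^3 \equiv f_1^3 + f_3 \pmod{2}$. Applying the substitution $q \mapsto q^{t_0}$, which replaces each $f_k$ by $f_{kt_0}$, produces $q^{t_0} f_{9t_0}^3 \equiv f_{3t_0} + f_{t_0}^3 \pmod{2}$. I would then raise both sides to the power $2^\alpha$, using the Frobenius identity $(A+B)^{2^\alpha} \equiv A^{2^\alpha} + B^{2^\alpha}$ over $\mathbb{F}_2$ together with $f_k^{2^\alpha} \equiv f_{2^\alpha k} \pmod 2$ and $t = 2^\alpha t_0$. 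Tracking the exponents ($f_{9t_0}^{3\cdot 2^\alpha} \equiv f_{9t}^3$, $f_{3t_0}^{2^\alpha} \equiv f_{3t}$, and $f_{t_0}^{3\cdot 2^\alpha} \equiv f_t^3$) collapses this to
\[ q^t f_{9t}^3 \equiv f_{3t} + f_t^3 \pmod{2}. \]

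Dividing by $f_1$ turns this into $q^t G_{9t} \equiv f_{3t}/f_1 + f_t^3/f_1 \pmod 2$, and since $f_{3t}/f_1 = \sum_n b_{3t}(n)q^n$ and $f_t^3/f_1 = G_t = \sum_n g_t(n)q^n$, comparing coefficients gives the exact congruence $g_{9t}(n) \equiv b_{3t}(n+t) + g_t(n+t) \pmod 2$ for all $n$. Now the hypothesis enters: $t_0/3 < 2^\alpha$ is precisely $t_0 < 3\cdot 2^\alpha$, so Theorem~\ref{smallt0} applies to $G_t$ (whose odd part is $t_0$ and $2$-adic part is $2^\alpha$) and yields $\delta^{(t)} = 0$, i.e. $g_t$ is lacunary modulo $2$. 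Hence $g_t(n+t) \equiv 0$ off a set of density $0$, giving $g_{9t}(n) \equiv b_{3t}(n+t)$ for almost all $n$. Finally, two sequences agreeing in parity off a density-$0$ set have the same density of odd values, and the fixed shift by $t$ does not affect density, so $\delta^{(9t)} = \delta^{[3t]}$ whenever these densities exist.

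The essential content is the base identity of clause (4), which I am assuming; granting it, the only genuine idea is the recognition that the error term produced by the substitution-and-Frobenius step is exactly $G_t$, and that the hypothesis $t_0/3 < 2^\alpha$ is precisely the range in which Theorem~\ref{smallt0} forces $G_t$ to be lacunary. The one place demanding care — and the step I would most want to verify — is the exponent bookkeeping in the Frobenius collapse, together with confirming that the congruence $q^tf_{9t}^3 \equiv f_{3t}+f_t^3$ is an exact equality of power series modulo $2$, so that the shift by $t$ is automatically built in and no separate handling of the finitely many low-order terms is needed.
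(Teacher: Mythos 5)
Your proposal is correct and is essentially the paper's own argument: the cleared-denominator identity $qf_9^3 \equiv f_1^3 + f_3$ is just Lemma \ref{f1f3-f1f5}(1) rearranged, your substitution-plus-Frobenius step reproduces in two moves what the paper gets by the single substitution $q \mapsto q^t$ (yielding $f_t^3 \equiv f_{3t} + q^t f_{9t}^3$), and both proofs then divide by $f_1$, recognize the error term as $G_t$, and kill it via CMSZ (Theorem \ref{smallt0}) under the hypothesis $t_0 < 3\cdot 2^\alpha$. The exponent bookkeeping you flagged checks out, so no gap remains.
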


In particular, if $\alpha > 0$, $\frac{t_0}{3} < 2^\alpha < 3t_0$, we conjectured that $\delta^{(9t)} = \frac{1}{2}$ (\cite{KZ2}, Conjecture 7).

\begin{theorem}\label{FiveT} Let $ t = 2^\alpha t_0$, with $t_0$ odd and $\alpha > 0$ (hence $t$ is even).  If $t_0 / 3 < 2^\alpha$, then assuming existence, the density $\delta^{(5t)}$ is the same as the density of the odd coefficients of $\frac{f_{t/2}f_{5t/2}}{f_1}$. Moreover, it almost always holds that $g_{5t}(n)$ has the same parity as the coefficient of $q^{n+t/2}$ in the latter series.
\end{theorem}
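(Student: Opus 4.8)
The plan is to bootstrap from the base case $t=2$, which is exactly clause (5) of Theorem \ref{equaldensities}, and then propagate it to all admissible $t$ by a Frobenius substitution. First I would record clause (5) in cleared form. Since $f_{10}\equiv f_5^2$ and $f_1^2\equiv f_2\pmod 2$, the congruence $qG_{10}\equiv f_5+f_1^5$ is equivalent to the theta identity
\[
q\,f_5^6 \equiv f_1 f_5 + f_1^6 \pmod 2 .
\]
Using the mod-$2$ reduction of Jacobi's identity, $f_a^3\equiv\psi(q^a)$ with $\psi(q)=\sum_{k\ge 0}q^{k(k+1)/2}$, this reads $q\,\psi(q^{10})\equiv f_1 f_5+\psi(q^2)$, i.e.\ a statement about which integers are represented an odd number of times as a pentagonal number plus five times a pentagonal number. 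This is the genuine arithmetic content of the theorem; everything after it is formal.

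Next I would apply the dilation $q\mapsto q^{t/2}$ to the base identity. This is legitimate: $\alpha\ge 1$ forces $t/2=2^{\alpha-1}t_0$ to be a positive integer, and $q\mapsto q^m$ is a ring homomorphism that preserves congruences modulo $2$ and sends $f_a$ to $f_{am}$. With $m=t/2$ it turns $q f_5^6\equiv f_1 f_5+f_1^6$ into $q^{t/2} f_{5t/2}^6\equiv f_{t/2}f_{5t/2}+f_{t/2}^6$. Collapsing even indices via $f_{5t/2}^6\equiv f_{5t}^3$ and $f_{t/2}^6\equiv f_t^3$, and then multiplying through by $1/f_1=\sum_n p(n)q^n$ (an integer power series, so the congruence is preserved), yields the key relation
\[
q^{t/2}G_{5t}\equiv \frac{f_{t/2}f_{5t/2}}{f_1}+G_t \pmod 2 ,
\]
whose error term is precisely $G_t=f_t^3/f_1$.

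It remains to show that $G_t$ is lacunary. Writing $t=2^\alpha t_0$, the hypothesis $t_0/3<2^\alpha$ gives $3\cdot 2^\alpha> t_0$, hence a fortiori $3\cdot 2^\alpha\ge t_0$, so Theorem \ref{smallt0} applies to $G_t$ and yields $\delta^{(t)}=0$; that is, $g_t(m)\equiv 0\pmod 2$ for almost all $m$. Now I would compare coefficients of $q^{n+t/2}$ in the displayed relation: the coefficient of $q^{n+t/2}$ in $q^{t/2}G_{5t}$ is exactly $g_{5t}(n)$, so writing $c(m)$ for the coefficient of $q^m$ in $f_{t/2}f_{5t/2}/f_1$ we get $g_{5t}(n)\equiv c(n+t/2)+g_t(n+t/2)\pmod 2$. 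Since $g_t(n+t/2)\equiv 0$ for almost all $n$, this gives $g_{5t}(n)\equiv c(n+t/2)$ for almost all $n$, which is the asserted parity statement; and because a fixed index shift together with a density-zero modification preserves relative density, $\delta^{(5t)}$ equals the density of the odd coefficients of $f_{t/2}f_{5t/2}/f_1$ whenever both densities exist.

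The only real obstacle is the base identity; the generalization to all admissible $t$ is purely formal. If one did not wish to quote clause (5) of Theorem \ref{equaldensities}, one would instead prove $q\,\psi(q^{10})\equiv f_1 f_5+\psi(q^2)\pmod 2$ directly, for instance by a $2$-dissection of $f_1 f_5$ via the Jacobi triple product, matching the even part to $\psi(q^2)$ and the odd part to $q\,\psi(q^{10})$. I expect this dissection, and the bookkeeping of exactly which quadratic-form representations survive modulo $2$, to be the most delicate point, while the crucial role of the inequality $t_0/3<2^\alpha$ is simply to force the error eta-quotient $G_t$ into the lacunary range governed by Theorem \ref{smallt0}.
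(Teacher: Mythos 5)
Your proposal is correct and follows essentially the same route as the paper: both start from the Hirschhorn identity $f_1 f_5 \equiv f_1^6 + q f_5^6$ (which is the second clause of Lemma \ref{f1f3-f1f5}, and equivalently the cleared form of clause (5) of Theorem \ref{equaldensities}), dilate by $q \mapsto q^{t/2}$, collapse even indices, divide by $f_1$, and kill the error term $G_t$ via the CMSZ lacunarity criterion (Theorem \ref{smallt0}). The only cosmetic difference is that you phrase the base case as clause (5) of Theorem \ref{equaldensities} rather than citing the lemma directly.
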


\subsection{Results for $f_1^t$} 

In  Section \ref{etaproofs}, we present proofs of some novel results on the pure eta-powers $f_1^t$.  As these, along with the multipartitions $1/f_1^t$, may reasonably be considered fundamental building blocks of the subject, it seems wise to include them in this series.  We add a few results to previously known facts, including relations between $G_t(q)$ and $f_1^s$ for various values of $t$ and $s$.  We list here the theorems to be proved.

 For $t$ a whole number, write $$f_1^t = \sum_{n=0}^\infty c_t(n) q^n.$$  Results for $c_t(n)$, combined with identities such as those of Theorem \ref{equaldensities}, then give further properties of the parities of the $G_j$.  For instance, clause (3) of Theorem \ref{equaldensities} yields:

\begin{corollary} $g_7(4n) \equiv c_5(n).$
\end{corollary}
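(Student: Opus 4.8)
The plan is to read off the progression $n \mapsto 4n$ directly from clause~(3) of Theorem~\ref{equaldensities}, namely $G_7 \equiv f_1^{20} + q f_1^2 f_7^6 + q^6 f_7^{24}/f_1^4 \pmod{2}$, and to show that only the first summand contributes to exponents divisible by $4$. The single tool needed throughout is the Frobenius identity over $\mathbb{F}_2$, $f_t^{2^j} \equiv f_{2^j t} \pmod{2}$ (together with its reciprocal version $1/f_t^{2^j} \equiv 1/f_{2^j t}$), which lets me track the residue modulo $4$ of every exponent appearing in each term.

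First I would dispose of the last two summands. Reducing modulo $2$, one has $q f_1^2 f_7^6 \equiv q f_2 f_{14} f_{28}$, whose exponents are all odd and hence never $\equiv 0 \pmod{4}$. Likewise $q^6 f_7^{24}/f_1^4 \equiv q^6 f_{56} f_{112}/f_4$; since $1/f_4$, $f_{56}$, and $f_{112}$ each involve only exponents divisible by $4$, every exponent occurring here is $\equiv 6 \equiv 2 \pmod{4}$, again missing the class $0 \pmod{4}$. It follows that $g_7(4n) \equiv [q^{4n}] f_1^{20} \pmod{2}$, so the first term alone governs the progression (consistently, $f_1^{20} \equiv f_{16} f_4$ lives entirely in the class $0 \pmod 4$).

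It remains to identify $[q^{4n}]f_1^{20}$ with $c_5(n)$. Writing $f_1^{20} = (f_1^5)^4$ and applying the squaring map twice, so that $A(q)^4 \equiv A(q^4) \pmod{2}$ for any integral power series $A$, I obtain $f_1^{20} \equiv \sum_{m \geq 0} c_5(m)\, q^{4m} \pmod{2}$; extracting the coefficient of $q^{4n}$ then yields $g_7(4n) \equiv c_5(n) \pmod{2}$, as claimed. There is no genuine obstacle here beyond the bookkeeping: the only point requiring care is the residue-modulo-$4$ analysis confirming that the two error terms of clause~(3) occupy the classes $\{1,3\}$ and $\{2\}$ modulo $4$, so that passing to the $4n$-progression annihilates them and isolates the clean power $f_1^{20} \equiv f_1^5(q^4)$.
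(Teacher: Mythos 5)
Your argument is correct and is precisely the (implicit) argument of the paper, which states the corollary as an immediate consequence of clause (3) of Theorem \ref{equaldensities}: the residue-class analysis modulo $4$ showing that $q f_1^2 f_7^6$ and $q^6 f_7^{24}/f_1^4$ occupy the classes $\{1,3\}$ and $\{2\}$, together with $f_1^{20} = (f_1^5)^4 \equiv \sum_m c_5(m) q^{4m}$, is exactly the intended bookkeeping. Nothing is missing.
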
 
\noindent
Thus, any parity result satisfied by $c_5(n)$ is inherited, at one-quarter density, by $g_7(4n)$.

Note that if $t = \sum 2^{i_j}$ is the binary expansion of $t$, then $$f_1^t \equiv f_{2^{i_1}} f_{2^{i_2}} f_{2^{i_3}} \dots $$
The greater the number of $i_j$ involved, the more difficult the theorems seem to become.

Our next two results are simple consequences of progressions forbidden by the pentagonal or triangular numbers.  We record them explicitly for completeness.  They are also easy to derive from Corollaries 2 and 3 in \cite{Chen2}. Each of them also yields the parity of the coefficients $c_{t}(n)$ for various classes of even $t$ by magnification; namely, in those instances where there is exactly one $i_j$, or only $i_j$ and $i_j+1$. We will not further repeat this fact.

\begin{theorem}\label{onepent}
For $m$ coprime to 6, we have $c_1(m n+ B) \equiv 0$ whenever $2 \cdot 3_m^{-1} B + 36_m^{-1}$ is not a quadratic residue mod $m$.
\end{theorem}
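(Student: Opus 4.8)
The plan is to reduce the statement to Euler's pentagonal number theorem together with an elementary quadratic-residue obstruction, since the parity of $c_1(n)$ is governed entirely by which $n$ are generalized pentagonal numbers.

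First I would invoke Euler's identity $f_1 = \prod_{k \geq 1}(1-q^k) = \sum_{j \in \Z} (-1)^j q^{j(3j-1)/2}$. Since every coefficient on the right is $\pm 1$, this shows that $c_1(N)$ is odd precisely when $N = \frac{j(3j-1)}{2}$ is a generalized pentagonal number, and $c_1(N) = 0$ otherwise. Thus establishing $c_1(mn+B) \equiv 0$ for all $n \geq 0$ is equivalent to showing that no integer congruent to $B$ modulo $m$ is a generalized pentagonal number.

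Next I would use the completion of the square $24 \cdot \frac{j(3j-1)}{2} + 1 = (6j-1)^2$, which gives the classical fact that $N$ is a generalized pentagonal number if and only if $24N+1$ is a perfect square. For $N = mn+B$ we have $24N + 1 \equiv 24B + 1 \pmod{m}$, and a perfect square necessarily reduces to a square modulo $m$. Hence if $24B+1$ is a quadratic nonresidue modulo $m$, then $24N+1$ can never be a perfect square for $N \equiv B \pmod m$, so no such $N$ is pentagonal and $c_1(mn+B) \equiv 0$ for every $n$.

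The final step is to match this with the stated condition. Because $\gcd(m,6)=1$, the integer $36$ is invertible modulo $m$ and is itself a square, so multiplication by $36^{-1} = (6^{-1})^2$ preserves quadratic-residue status; moreover $36^{-1}(24B+1) \equiv 2 \cdot 3^{-1} B + 36^{-1} \pmod m$. Therefore $24B+1$ is a nonresidue modulo $m$ exactly when $2 \cdot 3^{-1}B + 36^{-1}$ is, which is precisely the hypothesis. I expect no genuine obstacle here: the only care needed is in this last residue-class identification (and the routine verification that $j(3j-1)$ is always even, so that the pentagonal numbers are integers and the reduction modulo $m$ is legitimate).
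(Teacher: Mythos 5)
Your proposal is correct and follows essentially the same route as the paper: both arguments reduce the parity of $c_1$ to the pentagonal number theorem mod $2$ and then complete the square to obtain a quadratic-residue obstruction modulo $m$. The only cosmetic difference is that you complete the square over the integers via the classical criterion that $24N+1$ be a perfect square and then rescale by the square $36^{-1}$, whereas the paper completes the square directly modulo $m$ and solves for $\left(k-6^{-1}\right)^2$; these are the same computation.
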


\begin{theorem}\label{onetri}
For $m$ coprime to 6, we have $c_3(m n+ B) \equiv 0$ whenever $2 B + 4_m^{-1}$ is not a quadratic residue mod $m$.
\end{theorem}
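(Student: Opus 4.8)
The plan is to reduce $f_1^3$ modulo $2$ via Jacobi's identity and then translate ``$c_3(n)$ is odd'' into a statement about triangular numbers, exactly paralleling how the pentagonal number theorem underlies Theorem \ref{onepent}. Recall Jacobi's identity $f_1^3 = \sum_{k \ge 0} (-1)^k (2k+1)\, q^{k(k+1)/2}$. Since $2k+1$ is odd for every $k$, reducing modulo $2$ gives $f_1^3 \equiv \sum_{k \ge 0} q^{k(k+1)/2} \pmod 2$. The triangular numbers $T_k = k(k+1)/2$ are pairwise distinct as $k$ ranges over $k \ge 0$, so each $n$ is hit at most once; hence $c_3(n)$ is odd if and only if $n$ is a triangular number.

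Next I would use the classical characterization that $n$ is triangular if and only if $8n+1$ is a perfect square, which follows from $8 \cdot \frac{k(k+1)}{2} + 1 = (2k+1)^2$. Now consider a term $mn + B$ of the progression. If $c_3(mn + B)$ were odd, then $mn + B$ would be triangular, so $8(mn+B)+1 = 8B+1 + 8mn$ would be a perfect square; reducing modulo $m$, the residue $8B+1$ would have to be a quadratic residue mod $m$.

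It remains to connect $8B+1$ with the quantity $2B + 4^{-1}$ appearing in the statement. Because $\gcd(m,6)=1$, in particular $m$ is odd, so $4$ is an invertible square modulo $m$ and $4^{-1}$ makes sense; then $4\,(2B + 4^{-1}) \equiv 8B + 1 \pmod m$. Multiplication by the square unit $4$ preserves the property of being a quadratic residue, so $8B+1$ is a quadratic residue mod $m$ if and only if $2B + 4^{-1}$ is. Combining this with the previous paragraph and taking the contrapositive yields the theorem: if $2B + 4^{-1}$ is not a quadratic residue mod $m$, then no term $mn + B$ can be triangular, and therefore $c_3(mn + B) \equiv 0 \pmod 2$ for all $n \ge 0$.

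There is no serious obstacle here; the argument is elementary once the right congruence is in hand. The only points requiring care are (i) confirming that the triangular numbers are genuinely distinct, so that the mod-$2$ reduction of Jacobi's identity is the exact indicator function of triangularity rather than a count subject to cancellation, and (ii) the bookkeeping of the unit $4$ needed to pass from $8B+1$ to $2B+4^{-1}$, which is precisely where the hypothesis that $m$ be odd is used. I note that coprimality to $3$ is not actually needed for this particular statement (only oddness of $m$), the full hypothesis $\gcd(m,6)=1$ being natural for uniformity with Theorem \ref{onepent} and for the subsequent magnification remarks.
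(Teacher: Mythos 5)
Your proof is correct and follows essentially the same route the paper intends: the paper only says a proof ``similar'' to that of Theorem \ref{onepent} works, namely reducing $f_1^3$ mod $2$ to the triangular-number series and completing the square, and your use of the identity $8T_k+1=(2k+1)^2$ is exactly that completion of the square carried out over $\mathbb{Z}$ before reducing mod $m$. Your side remarks --- that the triangular numbers are distinct so no mod-$2$ cancellation occurs, and that only oddness of $m$ is actually needed --- are both accurate.
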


We make the following definition, which concerns a behavior common to not only eta-powers but also eta-quotients such as those studied in the previous papers of this series.

\begin{definition}
We say that $f_1^t$ is $p^2$-\emph{even} at a prime $p$ with base $r \in \{0, \dots, p^2-1\}$ if $c_t( p^2 n + k p + r) \equiv 0$ for all $k \in \{1, \dots, p-1\}$.  We will denote this more compactly by saying that $f_1^t$ is $(p,r)$-even.
\end{definition}

\begin{remark} This behavior is not exclusive to the power series $f_1^t$, so the definition above could certainly be used for other series.
\end{remark}

\phantom{.}

We next offer the following two conjectures.

\begin{alphconj}\label{allt}
For any given $t \geq 1$ odd, $f_1^t$ is $(p,r)$-even for a positive proportion of primes $p$, for some base $r$ depending on $t$ and $p$.
\end{alphconj}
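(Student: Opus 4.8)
The plan is to separate the lacunary exponents from the rest. First I would dispose of $t \in \{1,3\}$, where $f_1^t$ collapses modulo $2$ to a single one-variable theta series:
$$f_1 \equiv \sum_{m \in \Z} q^{(3m^2-m)/2}, \qquad f_1^3 \equiv \sum_{m \geq 0} q^{m(m+1)/2} \pmod 2 .$$
For these I would choose the \emph{ramified} residue $r_0$ modulo $p$ at which the relevant discriminant vanishes, namely $r_0 \equiv -24^{-1}$ for $t=1$ (so that $24N+1\equiv 0$) and $r_0 \equiv -8^{-1}$ for $t=3$ (so that $8N+1\equiv 0$). Since $24N+1=(6m-1)^2$ and $8N+1=(2m+1)^2$, a nonzero coefficient in the class $r_0 \pmod p$ forces a perfect square $\equiv 0 \pmod p$, hence $\equiv 0 \pmod{p^2}$; thus every odd coefficient in that class already lies in the single class $r \equiv -24^{-1}$ (resp. $-8^{-1}$) modulo $p^2$. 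This is exactly $p^2$-evenness, and it holds for \emph{every} $p>3$; it is the $p^2$-refinement of Theorems~\ref{onepent} and~\ref{onetri}, and accounts for the ubiquitous $24^{-1}$ in Theorem~\ref{longlist}.

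For the remaining (necessarily non-lacunary) odd $t$ the one-variable structure is gone, and I would recast the problem through the half-integral weight form $\eta^t$ of weight $t/2$ and its reduction modulo $2$. The point is that ``$f_1^t$ is $p^2$-even at $p$ with base $r$'' says precisely that inside the progression $r_0 \pmod p$ the modulo-$2$ support of $f_1^t$ is concentrated in the single class $r \pmod{p^2}$. I would try to produce such primes from the finiteness of the Hecke action modulo $2$: reductions of forms of fixed level span a finite-dimensional $\mathbb{F}_2$-space on which the operators $T_{p^2}$ act, and the attached residual representation has finite image $G$. By Chebotarev a positive proportion of primes have $\mathrm{Frob}_p$ in any prescribed conjugacy class of $G$, and I would select the class(es) at which the local data force the collapse of the support.

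The crux is the passage from a Hecke/Frobenius condition to coefficient vanishing, and this is where I expect the real difficulty. Writing $\lambda=(t-1)/2$, if $f_1^t$ lies in a $T_{p^2}$-eigencomponent with eigenvalue $\lambda_p$, then for $p\nmid n$
$$c_t(p^2 n) = \left(\lambda_p - \left(\tfrac{(-1)^\lambda n}{p}\right) p^{\lambda-1}\right) c_t(n),$$
so an \emph{odd} eigenvalue $\lambda_p$ gives $c_t(p^2 n)\equiv 0$ for every unit $n$ modulo $p$. Unfortunately this controls only the indices divisible by exactly $p^2$, not the whole progression $r_0 \pmod p$ demanded by the definition; worse, modulo $2$ the Legendre factor degenerates to $1$, so the quadratic-residue mechanism that does all the work for $t=1,3$ is invisible at the reduced level. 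The naive mod-$2$ argument is therefore genuinely too weak, and one must instead exploit the integral (or $2$-adic) relation together with fine information about how $\mathrm{Frob}_p$ acts on the residual representation.

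The principal obstacle, then, is to show that a positive-density set of primes actually produces the \emph{one}-class concentration required, rather than spreading the odd coefficients over several classes modulo $p^2$. For lacunary $t$ this is forced by the single quadratic form; for general odd $t$ one has $f_1^t \equiv f_1 f_{2^{i_2}} f_{2^{i_3}} \cdots \pmod 2$, a genuinely several-variable object whose residual Galois image we do not control, and the level set of a multivariable quadratic form in a class $r_0 \pmod p$ need not concentrate modulo $p^2$. Establishing the requisite concentration for a positive proportion of $p$ — equivalently, pinning down enough of the representation to locate a base $r$ — is exactly what keeps the statement conjectural; I would treat the first non-lacunary case $t=7$, where $f_1^7 \equiv f_1 f_2 f_4$, as the decisive test of any proposed method.
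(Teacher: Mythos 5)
The statement you were asked to prove is labelled a \emph{conjecture} in the paper, and the paper does not prove it in general: it establishes it only for $t$ expressible as $a+b\cdot 2^e$ with $a,b\in\{1,3\}$ (Theorems~\ref{twopowers} and~\ref{primeclasses}), leaving the rest open. Your proposal is honest on this point --- you prove the cases $t=1,3$ cleanly and explicitly flag that your Hecke/Galois program for general odd $t$ does not close --- and your one-variable argument for $t\in\{1,3\}$ (a nonzero coefficient in the class $-24^{-1}$ resp.\ $-8^{-1}$ mod $p$ forces a square divisible by $p$, hence by $p^2$, so the support collapses to one class mod $p^2$) is correct and is precisely the one-variable version of the paper's method; it in fact gives $p^2$-evenness at \emph{every} $p>3$ for these two values.

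The genuine gap, measured against what the paper actually establishes, is that you stop one variable short. You treat the binary factorization $f_1^t\equiv f_{2^{i_1}}f_{2^{i_2}}\cdots$ as the only available decomposition and conclude that everything beyond $t\in\{1,3\}$ is a ``genuinely several-variable object.'' But Jacobi's identity makes $f_1^3$ itself a one-variable theta series, so for $t=a+b\cdot 2^e$ with $a,b\in\{1,3\}$ one has $f_1^t\equiv f_{1\text{ or }3}\cdot f_{1\text{ or }3}^{2^e}$, a product of exactly \emph{two} quadratic series. Completing the square in each exponent modulo $p^2$ and running your own $t=1,3$ argument in two variables, a coefficient in the class $kp+r \pmod{p^2}$ with $p\nmid k$ forces $\left(x/y\right)^2\equiv -2^e$ (or $-3\cdot 2^e$) $\pmod p$, so $f_1^t$ is $p^2$-even whenever that quantity is a nonresidue --- a positive proportion of primes, which is all the conjecture asks. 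In particular your designated ``decisive test case'' $t=7=3+2^2$ is \emph{not} the first hard case: it is covered by this elementary argument (the paper's example table lists $c_7(25n+B)$ even for $B\in\{2,12,17,22\}$). The first $t$ genuinely requiring three quadratic series, and hence new ideas, is $t=21=1+4+16$, which the paper handles only by modular forms (Theorem~\ref{t21}) and which is where the conjecture truly remains open.
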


\begin{alphconj}\label{allprimes}
For any given prime $p$, there exist infinitely many $t \geq 1$ odd such that $f_1^t$ is $(p,r)$-even, for some base $r$ depending on $t$ and $p$.
\end{alphconj}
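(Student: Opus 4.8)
The plan is to read off the modulo-$2$ structure of $f_1^t$ from the binary expansion and then force the odd coefficients in each residue class modulo $p$ to concentrate in a single class modulo $p^2$. Writing $t = \sum_j 2^{i_j}$, the discussion preceding Theorem~\ref{onepent} gives $f_1^t \equiv \prod_j f_{2^{i_j}} \pmod 2$, and Euler's pentagonal number theorem turns each factor into a theta series: modulo $2$, $f_{2^i} \equiv \sum_{m \in \Z} q^{2^i m(3m-1)/2}$. Hence $c_t(n)$ is odd exactly when the number of representations $n = \sum_j 2^{i_j} m_j(3m_j-1)/2$ is odd. Substituting $X_j = 6m_j - 1$ and clearing denominators yields the clean identity $24\,n + t = \sum_j 2^{i_j} X_j^2$, so $f_1^t \equiv \Theta_{Q_t} \pmod 2$ for the quadratic form $Q_t(X) = \sum_j 2^{i_j} X_j^2$, whose number of variables is the binary weight of $t$. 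In this language, $p^2$-evenness at base $r$ asserts exactly that every $n \equiv r \pmod p$ represented by $Q_t$ already satisfies $n \equiv r \pmod{p^2}$.

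For $p > 3$ I would take the base $r \equiv -24^{-1} t \pmod{p^2}$, so that $n \equiv r \pmod p$ forces $Q_t(X) \equiv 0 \pmod p$. The desired concentration is then automatic once this congruence forces every $X_j \equiv 0 \pmod p$, for in that case each term $2^{i_j} X_j^2 \equiv 0 \pmod{p^2}$, whence $24\,n + t \equiv 0 \pmod{p^2}$ and $n \equiv r \pmod{p^2}$; the columns $k = 1, \dots, p-1$ then carry only even coefficients. This last implication is precisely the statement that $Q_t$ is anisotropic modulo $p$. For binary weight one ($t=1$) the form is the single square $X_1^2$, which is trivially anisotropic, and one recovers the $p^2$-evenness of $f_1$ at every $p>3$.

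To manufacture infinitely many odd $t$ at a fixed prime, I would use the binary-weight-two family $t = 2^a + 1$, where $Q_t = X_1^2 + 2^a X_2^2$. This form is anisotropic modulo $p$ if and only if $-2^a$ is a quadratic nonresidue mod $p$. Since $2^a \bmod p$ depends only on $a$ modulo $d = \ord_p(2)$, admissibility of $a$ is periodic; it therefore suffices to exhibit one $a_0$ with $-2^{a_0}$ a nonresidue, after which every $a \equiv a_0 \pmod d$ gives a distinct admissible odd $t = 2^a+1$. Such an $a_0$ exists whenever the coset $-\langle 2\rangle$ in $(\Z/p\Z)^\times$ is not contained in the group of squares, i.e. whenever $-1$ or $2$ is a nonresidue — that is, for every prime $p \not\equiv 1 \pmod 8$. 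This would settle the conjecture for all such $p$ (the primes $p = 2, 3$, at which $24$ is not invertible, call for a parallel treatment built on the triangular identity $f_1^3 \equiv \sum_m q^{m(m+1)/2}$, with $8$ in place of $24$).

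The hard part will be the primes $p \equiv 1 \pmod 8$. For these, $2$ and $-1$ are both squares, so $-2^a$ is a square for every $a$ and each form $X_1^2 + 2^a X_2^2$ is isotropic; moreover every odd $t$ of binary weight $\ge 3$ produces a form in $\ge 3$ variables, which is isotropic modulo every odd $p$ by Chevalley--Warning. Thus no quadratic form attached to an odd $t$ is anisotropic at such a $p$, and for an isotropic form the mixed representations with some $X_j \not\equiv 0 \pmod p$ genuinely scatter the odd coefficients across several classes modulo $p^2$, so the bare concentration argument cannot succeed for any base. Resolving $p \equiv 1 \pmod 8$ therefore seems to demand genuine modulo-$2$ cancellation among representations — controlling the parity of $\#\{(m_j) : \sum_j 2^{i_j} m_j(3m_j-1)/2 = n\}$ via genus theory or the CM structure of the underlying eta-powers, rather than the mere absence of representations — and this is the step I expect to be the true obstacle.
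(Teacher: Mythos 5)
Your reduction to the quadratic form $Q_t(X)=\sum_j 2^{i_j}X_j^2$ via $24n+t=\sum_j 2^{i_j}X_j^2$, the choice of base $r\equiv -24^{-1}t\pmod{p^2}$, and the anisotropy criterion for the family $t=2^a+1$ are all correct, and for $p\not\equiv 1\pmod 8$ this is essentially the paper's argument (Theorem~\ref{twopowers} with $a=b=1$, plus the periodicity of $2^a$ modulo $p$ to get infinitely many $t$). But there is a genuine gap relative to what the paper actually establishes (Corollary~\ref{fixedprime}: all $p\not\equiv 1\pmod{24}$, $p\ge 3$): you lose the primes $p\equiv 17\pmod{24}$. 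The cause is your identification of the number of variables of the relevant form with the binary weight of $t$, which leads you to dismiss every $t$ of binary weight $\ge 3$ by Chevalley--Warning. The paper avoids this by using Jacobi's identity $f_1^3\equiv\sum_n q^{\binom{n+1}{2}}$ to collapse $f_1^3$ (binary weight $2$) into a \emph{single} theta series in one variable. Thus for $t=a+b\cdot 2^e$ with $a,b\in\{1,3\}$ one always gets a \emph{binary} form --- one pentagonal and/or one triangular progression --- even though $t=2^d+3$, say, has binary weight $3$. The mixed cases $a\neq b$ introduce a factor of $3$ into the form, so the anisotropy condition becomes ``$-3\cdot 2^e$ is a nonresidue,'' which holds for every $e$ exactly when $-1$ and $2$ are residues but $3$ is not, i.e.\ for $p\equiv 17\pmod{24}$. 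That is precisely the class your family $X_1^2+2^aX_2^2$ cannot reach.

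Two smaller points. First, your closing diagnosis that all of $p\equiv 1\pmod 8$ requires genuine mod-$2$ cancellation is therefore too pessimistic: only $p\equiv 1\pmod{24}$ resists the representation-counting argument, and indeed the statement is a conjecture in the paper precisely because that residue class remains open (the paper proves it only for $p\not\equiv 1\pmod{24}$). Second, your sketch for $p=3$ via the triangular identity with $8$ in place of $24$ does match the paper's clause for $t=3\cdot 2^d+3$, so that part is fine; it is the deployment of the triangular factor for \emph{large} primes, not just $p=2,3$, that you are missing.
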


We establish Conjecture \ref{allt} for $t$ a sum of two quadratic terms, and Conjecture \ref{allprimes} for all primes other than 2 or those congruent to 1 mod 24.  

Corollaries 5 and 6 in \cite{Chen2} easily yield the cases in Theorem \ref{twopowers} of the form $a + b \cdot 2^e$ in which at least one of $a$ or $b$ is 3. These are also parts 1, 3, and 4 of Theorem \ref{primeclasses}.  We include a proof of part 2, and in Theorem \ref{primeclasses} we additionally provide the explicit values of the prime classes and progressions arising.  

\begin{theorem}\label{twopowers}
Let $t = a + b \cdot 2^e$, $a, b \in \{1,3\}$, $e > 0$.  Then $f_1^t$ is $(p,r)$-even for some base $r$, when $-2^{e}$ is a quadratic nonresidue modulo $p$ if $a=b$, and when $-3 \cdot 2^{e}$ is a nonresidue if $a \neq b$.  In particular, $f_1^t$ is $p^2$-even for a set of positive density in the primes, if such density exists.
\end{theorem}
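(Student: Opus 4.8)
The plan is to read off $f_1^t \pmod 2$ as a product of two theta-type series and thereby convert the parity of $c_t(m)$ into the parity of a representation number of a binary quadratic form. First I would use Euler's pentagonal number theorem $f_1 \equiv \sum_{j\in\Z} q^{j(3j-1)/2}$ and Jacobi's identity $f_1^3 \equiv \sum_{j\ge 0} q^{j(j+1)/2} \pmod 2$, together with the Frobenius congruence that raising to the $2^e$-th power replaces $q$ by $q^{2^e}$ modulo $2$, so in particular $f_1^{2^e}\equiv f_{2^e}$. Writing $f_1^t = f_1^a\cdot (f_1^b)^{2^e}$ and expanding the product, $c_t(m)$ becomes congruent mod $2$ to the number of pairs of exponents, one from each factor, summing to $m$. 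Since the signs $(-1)^j$ disappear modulo $2$, this is a clean lattice-point count.

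Next I would complete the square. Because $24\cdot\tfrac{j(3j-1)}{2}+1=(6j-1)^2$ and $8\cdot\tfrac{j(j+1)}{2}+1=(2j+1)^2$, multiplying the exponent equation by the least clearing factor $\lambda\in\{8,24\}$ turns it into $\lambda m + C = x^2 + D y^2$, with the triple $(\lambda,C,D)$ depending on the four cases: when $a=b$ one obtains $D=2^e$ (with $C=1+2^e$), and when $a\ne b$ one obtains $D=3\cdot 2^e$, or the equivalent anisotropic form $3x^2+2^e y^2$, with $C=1+3\cdot 2^e$ or $C=3+2^e$. The variables $x,y$ inherit harmless congruence or parity side-conditions that I will never need to track. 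Thus $c_t(m)\equiv \#\{(x,y): x^2+Dy^2 = \lambda m + C\}\pmod 2$.

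The arithmetic heart is then an anisotropy-plus-valuation argument. Fix a prime $p>3$ for which $-D$ is a quadratic nonresidue; then $x^2+Dy^2\equiv 0\pmod p$ forces $p\mid x$ and $p\mid y$, so any $N$ with $p\mid N$ but $p^2\nmid N$ (that is, with $p$-adic valuation $v_p(N)=1$) has no representation at all by the form. I would then take the base $r\equiv -C\lambda^{-1}\pmod{p^2}$; for $m=p^2 n+kp+r$ with $1\le k\le p-1$ this yields $N:=\lambda m + C\equiv \lambda k p\pmod{p^2}$, whence $v_p(N)=1$ since $p\nmid \lambda k$. The representation count is therefore $0$, so $c_t(p^2 n+kp+r)\equiv 0$ for every such $k$, which is exactly $p^2$-evenness at $p$ with this base. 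Finally, the condition that $-D$ be a nonresidue is a congruence on $p$ modulo $8$ (resp.\ $24$) satisfied by exactly half the admissible residue classes, so by Dirichlet these primes have density $1/2$, giving the positive-density conclusion.

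The case $a=b=1$, with form $x^2+2^e y^2$, is the genuinely new one; the three cases in which at least one of $a,b$ equals $3$ also follow from Corollaries 5 and 6 of \cite{Chen2}, and identically from the argument above. I expect the step requiring the most care to be purely organizational: fixing the four triples $(\lambda,C,D)$, checking the completing-the-square identities, and above all verifying that a \emph{single} base $r$—obtained by inverting modulo $p^2$ rather than modulo $p$—forces $v_p(N)=1$ simultaneously for every $k\in\{1,\dots,p-1\}$. The one genuinely number-theoretic input, the anisotropy of $x^2+Dy^2$ at nonresidue primes, is classical, and the suppressed congruence conditions on $x,y$ can never affect the conclusion, since we only ever use that the full representation count is zero.
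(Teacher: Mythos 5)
Your proposal is correct and takes essentially the same route as the paper's proof: the same factorization $f_1^t \equiv f_1^a\cdot\bigl(f_1^b\bigr)^{2^e}$, the same completion of squares, the same base $r$, and the same anisotropy-plus-valuation argument showing that when $-D$ is a nonresidue the value $N$ with $v_p(N)=1$ cannot be represented. The only difference is presentational --- you clear denominators to an integer equation $x^2+Dy^2=\lambda m+C$, whereas the paper works directly with the congruence modulo $p^2$ using inverses of $2$, $3$, and $24$ --- and this is immaterial.
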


\begin{theorem}\label{primeclasses}
We have the following cases of $t$ as the sum of two quadratic terms.
\begin{itemize} 
\item[1.$ \, $] If $t = 2^d+3$, $p \equiv 23 \pmod{24}$ prime, then $f_1^t$ is $(p,r)$-even, with $r \equiv - \left(2^{d-3} 3_{p^2}^{-1} + 2_{p^2}^{-3}\right) \pmod{p^2}$.
\item[$1^\prime$.] If $d$ is even in the previous clause, we may take $p \equiv 5 \pmod{6}$.  If $d$ is odd we may instead take $p \equiv 13 \pmod{24}$.
\item[2.$ \, $] If $t = 2^d + 1$, $p \equiv 7 \pmod{8}$ prime, then $f_1^t$ is $(p,r)$-even, with $r \equiv -3\left(2^{d-3} + 2_{p^2}^{-3}\right) \pmod{p^2}$.
\item[$2^\prime$.] If $d$ is even in the previous clause, we may take $p \equiv 3 \pmod{4}$, $p \geq 7$.
\item[3.$ \, $] If $t = 3 \cdot 2^d + 1$, $p \equiv 23 \pmod{24}$ prime, then $f_1^t$ is $(p,r)$-even,  with \\$r \equiv -\left(2^{d-3} + 2_{p^2}^{-3} \cdot 3_{p^2}^{-1}\right) \pmod{p^2}$.
\item[$3^\prime$.] If $d$ is even in the previous clause, we may take $p \equiv 5 \pmod{6}$.  If $d$ is odd we may instead take $p \equiv 13 \pmod{24}$.
\item[4.$ \, $] If $t = 3 \cdot 2^d + 3$, $p \equiv 7 \pmod{8}$ prime, then $f_1^t$ is $(p,r)$-even with $r \equiv - \left(2^{d-3} + 2_{p^2}^{-3}\right) \pmod{p^2}$.
\item[$4^\prime$.] If $d$ is even in the previous clause, we may take $p \equiv 3 \pmod{4}$.
\end{itemize}
When $d-3 < 0$, the resulting inverse is also taken modulo $p^2$.
\end{theorem}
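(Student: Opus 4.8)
The plan is to reduce $f_1^t \bmod 2$ to a representation count by a binary quadratic form, and then use the nonresidue hypothesis to force that count to be even. First I would invoke the two classical identities that govern eta-powers modulo $2$: Euler's pentagonal number theorem, $f_1 \equiv \sum_{j\in\Z} q^{j(3j-1)/2} \pmod 2$, and Jacobi's identity $f_1^3 = \sum_{j\ge 0}(-1)^j(2j+1)q^{j(j+1)/2} \equiv \sum_{j\ge 0} q^{j(j+1)/2} \pmod 2$, every coefficient $2j+1$ being odd. Combining these with the Frobenius congruence $g(q)^{2^d}\equiv g(q^{2^d})\pmod 2$, I would write, for $t=a+b\cdot 2^d$ with $a,b\in\{1,3\}$,
$$f_1^t=f_1^{a}\,(f_1^{b})^{2^d}\equiv\Big(\sum_i q^{Q_a(i)}\Big)\Big(\sum_j q^{2^d Q_b(j)}\Big)\pmod 2,$$
where $Q_1$ runs over generalized pentagonal numbers and $Q_3$ over triangular numbers. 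Thus $c_t(n)$ is congruent mod $2$ to the number of pairs $(i,j)$ with $Q_a(i)+2^d Q_b(j)=n$.

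Next I would complete both squares at once. From $24P_i=(6i-1)^2-1$ and $24T_i=3(2i+1)^2-3$, multiplying $Q_a(i)+2^dQ_b(j)=n$ through by $24$ yields
$$a\,x^2+b\,2^d\,y^2=24n+t,$$
the additive constant collapsing precisely to $t$; here $x,y$ satisfy fixed congruence and sign conditions inherited from $i,j$. I would then set the base $r\equiv -t\cdot 24^{-1}\pmod{p^2}$, so that along the progression $p^2m+kp+r$ the right-hand side becomes $M\equiv 24\,k\,p\pmod{p^2}$; since $p>3$ and $1\le k\le p-1$, this forces $v_p(M)=1$. Unwinding $24^{-1}=2^{-3}3^{-1}$ then reproduces the displayed value of $r$ in each of the four cases.

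The crux is the usual descent. With $A=a$ and $B=b\,2^d$, note $p\nmid AB$ for $p>3$, and the hypothesis of Theorem \ref{twopowers} says $-B/A$—namely $-2^d$ if $a=b$ and $-3\cdot 2^d$ if $a\ne b$—is a quadratic nonresidue modulo $p$. Then $A x^2+B y^2\equiv 0\pmod p$ forces $p\mid x$ and $p\mid y$, hence $p^2\mid Ax^2+By^2$; as $v_p(M)=1$, the equation $Ax^2+By^2=M$ has no integer solutions whatsoever, a fortiori none obeying the congruence conditions on $x,y$. Therefore $c_t(p^2m+kp+r)\equiv 0\pmod 2$ for every $k\in\{1,\dots,p-1\}$, which is exactly $p^2$-evenness at base $r$.

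It remains to translate, case by case, the nonresidue condition into a congruence on $p$ using the explicit evaluations of $\left(\frac{-1}{p}\right)$, $\left(\frac{2}{p}\right)$, $\left(\frac{3}{p}\right)$ and quadratic reciprocity; for instance $p\equiv 23\pmod{24}$ makes $-3\cdot 2^d$ a nonresidue for all $d$, while $p\equiv 7\pmod 8$ does the same for $-2^d$. For the primed sub-cases one simply observes that when $d$ is even $2^d$ is a perfect square and drops out of the Legendre symbol, relaxing the condition modulo $8$ (resp. $24$) to one modulo $4$ (resp. $6$). The one genuine bookkeeping hazard is keeping the completion-of-squares constant and the factor $24^{-1}$ consistent across all eight cases; the residue computations themselves are entirely mechanical.
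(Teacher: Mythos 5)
Your argument is correct and is essentially the paper's own proof: the same decomposition $f_1^t\equiv f_1^a\,(f_1^b)^{2^d}$ via Euler/Jacobi, the same completion of squares leading to $ax^2+b\,2^dy^2\equiv 24n+t$, the same descent ($p\mid x\Leftrightarrow p\mid y$, forcing $p^2\mid M$ against $v_p(M)=1$), and the same Legendre-symbol case analysis for the primed clauses. One point worth flagging: your uniform base $r\equiv -t\cdot 24^{-1}\pmod{p^2}$ gives $-3^{-1}\left(2^{d-3}+2^{-3}\right)$ in clause 2 rather than the stated $-3\left(2^{d-3}+2^{-3}\right)$, and the paper's own table entry ($t=17$, $p=7$, $r=34$) agrees with \emph{your} value, so the displayed formula in clause 2 appears to be a typo rather than an error on your part. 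A minor caveat: the ``multiply by $24$'' normalization requires $p>3$, so the $p=3$ instance permitted by clause $4^\prime$ (used in Corollary \ref{p5cor}) needs the triangular-only form $8T_i+1=(2i+1)^2$, which is consistent with the fact that the stated base in case 4 involves only $2^{-3}$ and not $3^{-1}$.
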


A brief inspection shows that the clauses above cover all odd primes other than those congruent to 1 mod 24.  Hence we have the following.

\begin{corollary}\label{fixedprime}
Conjecture \ref{allprimes} holds for all primes $p \not\equiv 1 \pmod{24}$, $p \geq 3$.
\end{corollary}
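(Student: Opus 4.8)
The plan is to prove the corollary purely as a covering argument over residue classes of $p$, drawing entirely on Theorem \ref{primeclasses}; no new analytic input is needed. Since every prime $p \geq 5$ is coprime to $24$, I would first list the eight reduced residues $p \bmod 24 \in \{1,5,7,11,13,17,19,23\}$ and discard the excluded class $1$. It then suffices to exhibit, for each of the remaining seven classes, one clause of Theorem \ref{primeclasses} whose hypothesis on $p$ is met, together with an infinite set of admissible exponents $d$.

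Concretely, I would organize the inspection as follows. The classes $p \equiv 7, 23 \pmod{24}$ are each $\equiv 7 \pmod 8$, so clauses $2$ and $4$ apply for every $d > 0$. The classes $p \equiv 5, 11, 17 \pmod{24}$ are precisely the reduced residues $\equiv 5 \pmod 6$, so the even-$d$ refinements $1'$ and $3'$ apply. The class $p \equiv 13 \pmod{24}$ is caught by the odd-$d$ alternatives of $1'$ and $3'$. Finally $p \equiv 19 \pmod{24}$ satisfies $p \equiv 3 \pmod 4$ and $p \geq 7$, so the even-$d$ refinements $2'$ and $4'$ apply. In each case the admissible $d$ form a full residue class (all $d > 0$, or all even $d$, or all odd $d$), and the corresponding $t \in \{2^d+3,\,2^d+1,\,3\cdot 2^d+1,\,3\cdot 2^d+3\}$ is odd and takes infinitely many distinct values; hence $f_1^t$ is $p^2$-even for infinitely many odd $t$, which is exactly Conjecture \ref{allprimes} for that $p$. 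One should also confirm, case by case, that the parity restriction on $d$ never shrinks the admissible set to a finite one, which it does not since each fixed parity class is infinite.

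The one genuinely delicate point, and the step I would treat most carefully, is the prime $p = 3$: it lies in scope, since $3 \not\equiv 1 \pmod{24}$, yet it is not coprime to $6$, where the pentagonal/triangular machinery behind Theorems \ref{onepent} and \ref{onetri} (and hence Theorem \ref{primeclasses}) could in principle degenerate. I would verify directly that $p = 3$ is reached by the even-$d$ branch of clause $4'$, noting $3 \equiv 3 \pmod 4$ and that $-2^d$ is a quadratic nonresidue mod $3$ for every even $d$ (as $2^d \equiv 1$, whence $-2^d \equiv 2$, a nonresidue), and that the asserted base $r \equiv -(2^{d-3}+2^{-3}) \pmod 9$ is well defined because $2$ is invertible mod $9$. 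Assembling the seven generic classes with this single checked special case exhausts all $p \not\equiv 1 \pmod{24}$ and completes the proof; the residual work is just the bookkeeping needed to certify that no reduced residue has been overlooked.
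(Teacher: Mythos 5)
Your proposal is correct and follows essentially the same route as the paper: a covering argument over the reduced residue classes of $p$ (you work mod $24$ and invoke the explicit clauses of Theorem \ref{primeclasses}, while the paper works mod $8$ with one mod-$24$ refinement and cites Theorem \ref{twopowers} directly, but these are the same quadratic-residue case check). Your explicit treatment of $p=3$ via clause $4'$ is a welcome extra care that the paper's proof leaves implicit (it is confirmed separately there in Corollary \ref{p5cor}).
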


Theorem \ref{primeclasses} applies for $p=5$ in the case $1^\prime$ and $p=3$ in the case $4^\prime$.  In those cases, we observe the following stable behavior of $r$.

\begin{corollary}\label{p5cor}
For $p=5$, $t = 2^{2k}+3$, $d \geq 1$, we have $r \equiv t \pmod{25}$.  For $p=3$, $t = 3 \cdot 2^{2k} + 3$, $d \geq 1$, we have $r \equiv t/3 \pmod{9}$ (and the latter will always be 2 mod 3).
\end{corollary}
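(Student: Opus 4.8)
The plan is to read off the two applicable clauses of Theorem~\ref{primeclasses} and then simplify the resulting base $r$ modulo $p^2$ for the specific primes $5$ and $3$; the entire content is the observation that these specializations collapse to the clean closed forms claimed, so the work is bookkeeping plus a handful of small modular inverses.

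First I would handle $p=5$. Writing $t = 4^d + 3 = 2^{2d}+3$ exhibits $t$ in the shape $t = 2^D + 3$ of Case~1 with even exponent $D = 2d$. Because $D$ is even, Case~$1^\prime$ permits the prime class $p \equiv 5 \pmod 6$; since $5 \equiv 5 \pmod 6$, the hypotheses are met and $f_1^t$ is $p^2$-even at $5$ with base $r \equiv -\left(2^{2d-3}\,3^{-1} + 2^{-3}\right) \pmod{25}$. It then remains only to reduce this expression. Using $8^{-1} \equiv 22$ and $3^{-1} \equiv 17 \pmod{25}$, one has $2^{2d-3} = 4^d \cdot 8^{-1} \equiv 22\cdot 4^d$, and the coincidence $22 \cdot 17 \equiv -1 \pmod{25}$ makes the bracket collapse to $-4^d + 22$; hence $r \equiv 4^d - 22 \equiv 4^d + 3 = t \pmod{25}$, as claimed.

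For $p = 3$ I would proceed analogously. Writing $t = 3\cdot 4^d + 3 = 3\cdot 2^{2d}+3$ puts $t$ in the shape $t = 3\cdot 2^D + 3$ of Case~4 with $D = 2d$ even, so Case~$4^\prime$ allows $p \equiv 3 \pmod 4$, and $3 \equiv 3 \pmod 4$ qualifies. The base is then $r \equiv -\left(2^{2d-3} + 2^{-3}\right) \pmod 9$. Here the simplification is even cleaner: since $8 \equiv -1 \pmod 9$ we get $2^{-3} \equiv -1$ and $2^{2d-3} = 4^d \cdot 8^{-1} \equiv -4^d \pmod 9$, so $r \equiv 4^d - 8 \equiv 4^d + 1 = t/3 \pmod 9$. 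Finally, $4 \equiv 1 \pmod 3$ gives $4^d + 1 \equiv 2 \pmod 3$, accounting for the parenthetical remark.

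There is no genuine obstacle of a conceptual nature, as all the substance already resides in Theorem~\ref{primeclasses}; the only points requiring care are verifying that the relaxed prime classes of the primed clauses genuinely admit $p = 5$ and $p = 3$ (which is where the even-exponent hypothesis $D = 2d$ is used) and computing the inverses modulo $25$ and modulo $9$ without slips.
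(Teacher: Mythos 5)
Your proposal is correct and follows essentially the same route as the paper: identify clauses 1/$1^\prime$ (for $p=5$) and 4/$4^\prime$ (for $p=3$) of Theorem~\ref{primeclasses}, verify the prime classes admit $p=5$ and $p=3$ via the even exponent $2d$, and simplify the base using $3^{-1}\equiv -8$, $2^{-3}\equiv -3 \pmod{25}$ and $2^{3}\equiv -1 \pmod 9$. Your inverses $17$ and $22$ modulo $25$ are the same values the paper uses in the form $-8$ and $-3$, so the computations coincide.
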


The first example of Corollary \ref{p5cor} is that $c_{15}(9n+2)$ and $c_{15}(9n+8)$ are always even.

\begin{example}
As a consequence of the proofs of the above theorems, we can obtain a large class of new congruences, of which the following is the barest sample.  Here $t$ is the power, $p$ a relevant prime from Theorem \ref{primeclasses}, and $r$ the base given in the theorem, so that $c_t(p^2n + k p + r)$ is even for all $k \in \{1, \dots , p-1\}$.  (For instance, the first line in the left table says that $c_7(25n+B)$ is even for $B \in \{2, 12, 17, 22\}$.)

\phantom{.}

\begin{center}\begin{tabular}{rcl}\begin{tabular}{|c|c|c|}
\hline $t$ & $p$ & $r$ \\
\hline 7 & 5 & 7  \\
\hline 17 & 7 & 34  \\
\hline 193 & 5 & 18  \\
\hline 195 & 3 & 2  \\
\hline 
\end{tabular} & \begin{tabular}{|c|c|c|}
\hline $t$ & $p$ & $r$ \\
\hline 7 & 23 & 154 \\
\hline 17 & 11 & 85 \\
\hline 193 & 47 & 84 \\
\hline 195 & 71 & 622 \\
\hline 
\end{tabular}
\end{tabular}
\end{center}
\end{example}
{\ }\\
Note that theorems regarding powers $t$ that require more terms seem to rapidly increase in difficulty. The following results, the last of which is due to Chen, rely on the theory of modular forms.

\begin{theorem}\label{dminus1}
There exists no progression $An+B$, $A \neq 0$, for which $c_{2^d-1}(An+B) \equiv 0$ for all $d$.
\end{theorem}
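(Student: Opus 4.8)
The plan is to reduce the statement to Radu's theorem \cite{RaduNoEvens} by showing that, modulo $2$, the coefficients $c_{2^d-1}(n)$ stabilize to the parity of the ordinary partition function $p(n)$ as $d$ grows. The starting point is the Frobenius identity $g(q)^2 \equiv g(q^2) \pmod 2$, valid for any integral power series, applied $d$ times to $g = f_1$; this yields $f_1^{2^d} \equiv f_{2^d} \pmod 2$. Dividing by $f_1$ gives
\[
f_1^{2^d-1} \;=\; \frac{f_1^{2^d}}{f_1} \;\equiv\; \frac{f_{2^d}}{f_1} \pmod 2 .
\]
Since $f_{2^d} = \prod_{k\geq 1}(1-q^{k\cdot 2^d}) = 1 - q^{2^d} - \cdots \equiv 1 \pmod{q^{2^d}}$, multiplying the series $1/f_1 = \sum_n p(n) q^n$ by $f_{2^d}$ leaves every coefficient of index below $2^d$ unchanged. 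Hence
\[
c_{2^d-1}(n) \;\equiv\; p(n) \pmod 2 \qquad (0 \le n < 2^d).
\]

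With this stabilization in hand, the theorem follows quickly. Suppose, for contradiction, that $An+B$ (taking $A>0$ without loss of generality) were a progression on which $c_{2^d-1}(An+B) \equiv 0 \pmod 2$ for every $d$ and every $n \geq 0$. Fix an arbitrary $n_0 \geq 0$ and choose $d$ large enough that $An_0 + B < 2^d$. The displayed congruence then forces $p(An_0+B) \equiv c_{2^d-1}(An_0+B) \equiv 0 \pmod 2$. As $n_0$ was arbitrary, $p(An+B)$ would be even for all $n$, contradicting Radu's theorem that no arithmetic progression on which $p$ is always even exists.

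The only genuinely hard input is Radu's theorem, whose proof rests on the theory of modular forms modulo $2$; everything else is an elementary $q$-adic stabilization, which is why I would expect the modular-forms content to be entirely imported rather than reproved. The one point requiring a little care is that the hypothesis quantifies over all $d$ simultaneously: because vanishing for all $d$ forces, pointwise in $n$, vanishing in the $d\to\infty$ limit, it suffices to contradict the single assertion about the parity of $p$ on $An+B$, which is exactly what Radu's theorem supplies.
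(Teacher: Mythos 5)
Your proposal is correct and follows essentially the same route as the paper's proof: reduce $f_1^{2^d-1}$ to $f_{2^d}/f_1$ modulo $2$, observe that its coefficients agree with the parity of $p(n)$ for $n < 2^d$, and then invoke Radu's theorem to find an odd value of $p$ on the given progression. Your write-up is slightly more explicit about why the initial segment of coefficients matches, but the argument is the same.
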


\begin{theorem}\label{t21}
We have $c_{21}(49n+k) \equiv 0 \pmod{2}$ for $k \in \{14, 28, 35\}$.
\end{theorem}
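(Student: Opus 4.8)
The plan is to reduce the congruence modulo $2$ to a statement about a weight $3/2$ eta-quotient and then analyze its coefficients at the prime $7$. Writing $21 = 2^4 + 2^2 + 2^0$ and applying $(1-q^m)^2 \equiv 1-q^{2m} \pmod 2$ termwise, we obtain $f_1^{21} \equiv f_{16}f_4 f_1 \pmod 2$, so that $c_{21}(n) \equiv [q^n]\, f_1 f_4 f_{16} \pmod 2$. With $q = e^{2\pi i \tau}$, set $F(\tau) = \eta(\tau)\eta(4\tau)\eta(16\tau)$; since $(1+4+16)/24 = 7/8$, we have $F(\tau) = q^{7/8} f_1 f_4 f_{16}$, a weight $3/2$ eta-quotient, and the coefficient of $q^{(24n+21)/24}$ in $F$ is congruent mod $2$ to $c_{21}(n)$.

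First I would rephrase the progressions in terms of $N := 24n+21$. Because $24\cdot 49 = 1176$, reduction modulo $49$ gives $N \equiv 24k + 21 \pmod{49}$, which for $k = 14, 28, 35$ equals $14, 7, 28$ respectively. In each case $N \equiv 7r \pmod{49}$ with $r \in \{2,1,4\}$, exactly the nonzero quadratic residues mod $7$; in particular $v_7(N) = 1$ and $N/7$ is a square mod $7$. A direct check shows that the remaining multiples $k \in \{0,7,21\}$ of $7$ (with $v_7(N)=1$ as well) instead give $N/7$ a nonresidue, while $k = 42$ gives $49 \mid N$. Thus the content of Theorem \ref{t21} is precisely that the $N$-th coefficient of $F$ is even when $v_7(N)=1$ and $N/7$ is a quadratic residue mod $7$ --- the sensitivity to this residue being the key feature to explain.

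The heart of the argument is a $7$-local analysis of $F \bmod 2$. Viewing $F$ as the ternary theta series $\sum_{x,y,z\ge 1} \chi_{12}(x)\chi_{12}(y)\chi_{12}(z)\, q^{(x^2+4y^2+16z^2)/24}$ (with $\chi_{12}=\left(\tfrac{12}{\cdot}\right)$), it is a genuine weight $3/2$ modular form with character, and modulo $2$ the signs disappear, giving $c_{21}(n) \equiv \#\{(x,y,z): x,y,z>0,\ \gcd(xyz,6)=1,\ x^2+4y^2+16z^2 = N\} \pmod 2$. I would then invoke the half-integral weight Hecke operator $T_{49}$ (equivalently the Shimura correspondence) at the prime $7$, which is coprime to the level of $F$: since $\{1,4,16\}$ are powers of $2$ and $\chi_{12}$ has modulus $12$, the level is divisible only by $2$ and $3$. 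Reducing the $T_{49}$-relation modulo $2$ places $F$ in a finite-dimensional space of mod-$2$ forms, and the eigendata at $7$ --- which detects exactly the class of $N/7$ mod $7$ --- forces the coefficient $a(N)$ to be even in the residue classes above. Equivalently, on the quadratic-form side one organizes the solution set into a fixed-point-free involution: since $x^2+4y^2+16z^2$ is isotropic (indeed universal) over $\Q_7$ and $v_7(N)=1$, no solution has $7 \mid \gcd(x,y,z)$, and the $7$-adic symmetry of the isotropic cone pairs solutions without fixed points precisely when $N/7$ is a square.

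The main obstacle is this $7$-local computation: either pinning down the $T_{49}$-eigenstructure of $F$ modulo $2$ sharply enough to read off the vanishing and to see why the quadratic-residue condition (rather than merely $v_7(N)$ being odd) is what triggers it, or writing the fixed-point-free involution explicitly while respecting the congruences $x\equiv y\equiv z\equiv -1 \pmod 6$ encoded in the character $\chi_{12}$. A secondary technical point is that $\sum_\delta \delta r_\delta = 21 \not\equiv 0 \pmod{24}$, so $F$ carries the nontrivial multiplier visible in the shift $q^{7/8}$; one must either work on a congruence group and character for which Ligozat's criteria hold (so that the Hecke theory and its mod-$2$ reduction are legitimate) or track this multiplier through the $T_{49}$-action. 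Given the availability of Chen's result, I would ultimately follow the modular-forms route, using the theta presentation above to make the $7$-adic structure explicit.
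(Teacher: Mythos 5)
Your setup is correct and useful as far as it goes: $f_1^{21}\equiv f_1f_4f_{16}$, the ternary theta presentation, and the observation that the three classes $k\in\{14,28,35\}$ are exactly those with $N=24n+21\equiv 7r\pmod{49}$, $r$ a nonzero quadratic residue mod $7$, all check out. But the proof stops exactly where the theorem begins: you never establish the evenness, and you say so yourself (``the main obstacle is this $7$-local computation''). Worse, the mechanism you propose for that step does not work as described. The half-integral-weight Hecke relation for $T_{49}$ links $a(N)$, $a(49N)$, $a(N/49)$ with a twist term involving $\left(\frac{(-1)^{\lambda}N}{7}\right)$; when $7\,\|\,N$ that symbol vanishes and the relation degenerates to $a(343M)=\lambda_7\,a(7M)$ for $N=7M$, which gives no information about $a(7M)$ itself and, crucially, cannot distinguish the residue class of $M=N/7$ modulo $7$ --- the $T_{49}$-orbits never mix different classes of $M$ mod $7$. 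So ``the eigendata at $7$ detects exactly the class of $N/7$ mod $7$'' is not something $T_{49}$ can deliver. The fixed-point-free involution is likewise only a hope; no pairing is exhibited, and the claim that the $7$-adic isotropy ``pairs solutions without fixed points precisely when $N/7$ is a square'' is asserted, not proved.

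The operator that actually captures the phenomenon is $U(7)$, not $T_{49}$, and that is the route the paper takes: one proves (by identifying $q^7f_8^{21}\,\vert\,U(7)$ with an explicit eta-quotient via a Sturm-bound computation) the congruence
\begin{equation*}
q\sum_{n\ge 0}c_{21}(7n)\,q^{8n}\;\equiv\;q\,f_8^3+q^{49}f_{56}^{21},
\end{equation*}
after which the three classes fall out by inspection: $f_8^3\equiv\sum q^{8\binom{j+1}{2}}$ is supported on $8m$ with $m$ triangular, and the triangular numbers miss exactly the residues $2,4,5$ mod $7$, while the second term contributes only to $m\equiv 6\pmod 7$. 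Hence $c_{21}(7m)\equiv 0$ for $m\equiv 2,4,5\pmod 7$, which is the theorem. If you want to salvage your approach, you should aim at an identity of this $U(7)$ type (or an explicit recursion for $c_{21}(7m)$ in terms of a single quadratic form) rather than at $T_{49}$-eigenstructure.
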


\begin{theorem}[Chen \cite{Chenktuple}]\label{chenthm}
Let $k=2^r s$ with $s$ odd, say $s = \sum_{i=0}^\infty \beta_i 2^i$ where $\beta_i \in \{0,1\}$.  Let $g_s = 1+ \sum_{i=0}^\infty \beta_{2i+1} 2^i + \sum_{j=0}^\infty \beta_{2j+2} 2^j$, and assume $m \geq g_s$ is an integer. Then, for any distinct odd primes $\ell_1, \dots , \ell_m$, we have
$$c_{3k} \left( \frac{2^r \ell_1 \ell_2 \cdots \ell_m n - k}{8} \right) \equiv 0,$$ for all $n$ coprime to $\ell_1 \ell_2 \cdots \ell_m$ and satisfying $\ell_1 \ell_2 \cdots \ell_m n \equiv s \pmod{8}$.
\end{theorem}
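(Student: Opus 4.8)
The plan is to reduce the statement to the nilpotency of the Hecke algebra acting on level-one modular forms modulo $2$, in the spirit of Nicolas and Serre, after first recognizing $f_1^{3s} \pmod 2$ as (a rescaling of) a power of the discriminant form $\Delta$.

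First I would strip off the power of two. Writing $k = 2^r s$ with $s$ odd and using the Frobenius congruence $f_1^{3k} = \bigl(f_1^{3s}\bigr)^{2^r} \equiv \bigl(f_1^{3s}\bigr)(q^{2^r})$, one gets $c_{3k}(N) \equiv c_{3s}(N/2^r)$ when $2^r \mid N$, and $c_{3k}(N) \equiv 0$ otherwise. Set $L = \ell_1 \cdots \ell_m$ and $N = (2^r L n - k)/8$. The hypothesis $Ln \equiv s \pmod 8$ gives $N = 2^r \cdot (Ln - s)/8$, so $2^r \mid N$ and $N/2^r = M := (Ln - s)/8 \in \N$. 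Hence it suffices to prove $c_{3s}(M) \equiv 0$ for $s$ odd, $n$ coprime to $L$, and $Ln \equiv s \pmod 8$.

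Next I would pass to $\Delta$. Jacobi's identity $f_1^3 = \sum_{k \ge 0} (-1)^k (2k+1) q^{k(k+1)/2}$ gives, after multiplying by $q^{s/8}$ and reducing mod $2$, the theta expansions $\eta^3 \equiv \sum_{j \ge 1,\, \mathrm{odd}} q^{j^2/8}$ and, applying Frobenius to the eighth power, $\Delta = \eta^{24} = (\eta^3)^8 \equiv \sum_{k \ge 0} q^{(2k+1)^2}$. Substituting $Q = q^{1/8}$ into $\eta^{3s} = q^{s/8} f_1^{3s}$ and comparing with $\bigl(\sum_{j\,\mathrm{odd}} Q^{j^2}\bigr)^s \equiv \Delta(Q)^s$, one reads off $c_{3s}(M) \equiv \tau_s(Ln)$, where $\tau_s(N)$ denotes the coefficient of $q^N$ in $\Delta^s$. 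Here the congruence $Ln \equiv s \pmod 8$ is exactly the condition for $Ln$ to be a sum of $s$ odd squares, so this is the ``live'' coefficient and the vanishing is genuine content. The task is now to show $\tau_s(Ln) \equiv 0$. I would extract this coefficient via Hecke operators: in weight $12s$ modulo $2$, since $\ell^{12s-1} \equiv 1$ for odd $\ell$, one has $(T_\ell F)(n) = c_F(\ell n) + c_F(n/\ell)$, so for $n$ coprime to $\ell$ the $n$-th coefficient of $T_\ell F$ is simply $c_F(\ell n)$. Iterating over the distinct odd primes $\ell_1, \dots, \ell_m$ and using that $n$ is coprime to $L$, the $n$-th coefficient of $T_{\ell_m}\cdots T_{\ell_1}\Delta^s$ equals $\tau_s(\ell_1 \cdots \ell_m n) = \tau_s(Ln)$. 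Thus it is enough to prove that this iterated operator annihilates $\Delta^s$.

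This last step is the crux, and it is where the deep input and the main obstacle lie. Nicolas and Serre show that the operators $T_\ell$ ($\ell$ odd) act locally nilpotently on the space $\mathbb{F}_2[\Delta]$ of level-one modular forms mod $2$, with a computable order of nilpotency; since the $T_\ell$ commute, once $m$ exceeds this order every product $T_{\ell_m}\cdots T_{\ell_1}\Delta^s$ vanishes. The hard part will be identifying $g_s$ with the Nicolas--Serre nilpotency order of $\Delta^s$, i.e.\ matching their recursive binary-digit description of the order to the explicit expression $g_s = 1 + \sum_i \beta_{2i+1} 2^i + \sum_j \beta_{2j+2} 2^j$. Granting this identification, $m \ge g_s$ forces $T_{\ell_m}\cdots T_{\ell_1}\Delta^s \equiv 0$, whence $\tau_s(Ln) \equiv 0$ and $c_{3s}(M) \equiv 0$, completing the proof. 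As a consistency check, for $s = 1$ the expression yields $g_1 = 1$, matching $T_\ell \Delta = \tau(\ell)\Delta \equiv 0$ for every odd prime $\ell$, since $\tau(\ell)$ is even (as $\ell$ is not an odd square).
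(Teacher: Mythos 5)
The paper does not actually prove this statement---it is quoted verbatim from Chen \cite{Chenktuple}---but your outline is essentially Chen's published argument: strip the factor $2^r$ by Frobenius, identify $c_{3s}\bigl((\ell_1\cdots\ell_m n-s)/8\bigr)$ with the coefficient of $q^{\ell_1\cdots\ell_m n}$ in $\Delta^s \equiv \bigl(\sum_{k\ge0} q^{(2k+1)^2}\bigr)^s \pmod 2$, extract that coefficient with the operators $T_{\ell_i}$ (using that $n$ is coprime to $\ell_1\cdots\ell_m$ and the primes are distinct), and invoke the Nicolas--Serre theorem on the nilpotence order of $\Delta^s$ in $\mathbb{F}_2[\Delta]$, whose value $1+n_3(s)+n_5(s)$ in terms of the binary digits of $s$ is precisely the stated $g_s$. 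All the reduction steps check out, so the only external input is the Nicolas--Serre result you correctly identify as the crux.
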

\noindent
(It should be noted that Theorem \ref{t21} does \emph{not} follow from Theorem \ref{chenthm}.)

\section{Background}\label{background}

We will use without frequent comment the identities $f_{2t} \equiv f_t^2$ for $t \in \mathbb{N}$, and $-x \equiv x \pmod{2}$.  The former allows us to move factors of 2 between exponents and bases in $f_t^d$, and the latter to disregard signs in summations.

The following are the mod 2 reductions of Euler's {pentagonal number theorem} and a well-known identity of Jacobi, respectively (see \cite{Andr}, Equations (1.3.1) and (2.2.13)).

\begin{theorem} \begin{align*}f_1 &\equiv \sum_{m=-\infty}^\infty q^{(m/2)(3m-1)} = 1 + q + q^2 + q^5 + q^7 + \dots ;\\
f_1^3 &\equiv \sum_{n=0}^\infty q^{\binom{n+1}{2}}.\end{align*}
\end{theorem}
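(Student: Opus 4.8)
The plan is to recognize both congruences as the modulo $2$ reductions of classical integer identities and then to recover those identities from a single source, the Jacobi triple product. For the first line, Euler's pentagonal number theorem states $f_1 = \sum_{m=-\infty}^\infty (-1)^m q^{m(3m-1)/2}$; since $-1\equiv 1\pmod 2$ every sign disappears, and as $m$ ranges over $\Z$ the exponents $m(3m-1)/2$ enumerate precisely the generalized pentagonal numbers $0,1,2,5,7,\dots$, giving the stated right-hand side. For the second line, Jacobi's identity states $f_1^3=\sum_{n=0}^\infty(-1)^n(2n+1)q^{\binom{n+1}{2}}$; here each coefficient $2n+1$ is odd, so it reduces to $1$ modulo $2$, the sign again disappears, and the exponents $\binom{n+1}{2}$ run over the triangular numbers. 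Thus it suffices to prove the two integer identities.

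First I would establish the Jacobi triple product $\prod_{k\ge1}(1-q^{2k})(1+zq^{2k-1})(1+z^{-1}q^{2k-1})=\sum_{n=-\infty}^\infty z^n q^{n^2}$, for instance by Andrews' $q$-series argument (expanding one factor via Euler's identities and recognizing the resulting functional equation in $z$) or by a sign-reversing bijection. Euler's pentagonal theorem then follows from the substitution $q\mapsto q^{3/2}$, $z\mapsto -q^{-1/2}$: the left side becomes $\sum_m(-1)^m q^{m(3m-1)/2}$, while the three product factors collapse to $\prod_m(1-q^{3m})(1-q^{3m-1})(1-q^{3m-2})=\prod_{k\ge1}(1-q^k)=f_1$, since the exponents $3m,3m-1,3m-2$ exhaust the positive integers.

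For the cube, rather than differentiating the triple product in $z$ and taking $z\to-1$ (the usual but more delicate route), I would exploit that only the parity is needed. Modulo $2$ the background identity $f_2\equiv f_1^2$ gives $f_1^4\equiv f_2^2$, whence $f_1^3=f_1^4/f_1\equiv f_2^2/f_1$. By Gauss's identity $f_2^2/f_1=\sum_{n\ge0}q^{\binom{n+1}{2}}$ — itself another specialization of the triple product — this is exactly the desired theta series, and the second congruence follows at once.

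The main obstacle is the triple product together with the bookkeeping in the pentagonal substitution, where the half-integer powers of $q$ must recombine into the integer exponents $m(3m-1)/2$ and the three products must be checked to tile the positive integers without overlap; once the triple product is in hand, everything else is formal. For the first line one may instead sidestep the triple product entirely via Franklin's involution on partitions into distinct parts, which directly yields the pentagonal cancellation and whose mod $2$ content is transparent; the cube identity is less amenable to such a combinatorial treatment, which is why I favor the algebraic $f_2^2/f_1$ reduction there. In every case the final mod $2$ reduction itself is immediate.
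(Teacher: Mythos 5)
Your proposal is correct. The paper itself offers no proof of this background theorem: it simply cites Euler's pentagonal number theorem and Jacobi's identity from Andrews' book (Equations (1.3.1) and (2.2.13) of \cite{Andr}) and observes that the mod $2$ reduction is immediate because $(-1)^m\equiv 1$ and $2n+1\equiv 1\pmod 2$. You identify exactly the same two classical identities as the source of the congruences, so at that level the approaches coincide; the difference is that you go on to derive both from the Jacobi triple product. Your specializations check out: $q\mapsto q^{3/2}$, $z\mapsto -q^{-1/2}$ turns the product side into $\prod_k(1-q^{3k})(1-q^{3k-1})(1-q^{3k-2})=f_1$ and the sum side into $\sum_m(-1)^mq^{m(3m-1)/2}$, and the exponents are pairwise distinct over $m\in\Z$, so the mod $2$ series is indeed $1+q+q^2+q^5+q^7+\cdots$. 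Your treatment of the cube is a genuinely nicer route than the standard one: instead of proving Jacobi's full identity $f_1^3=\sum(-1)^n(2n+1)q^{\binom{n+1}{2}}$ by differentiating the triple product, you only need the mod $2$ statement, and the chain $f_1^3\equiv f_1^4/f_1\equiv f_2^2/f_1=\sum_{n\ge 0}q^{\binom{n+1}{2}}$ (the last step being Gauss's identity, another direct specialization of the triple product) gives it with no limiting argument at all. This buys a fully elementary, self-contained proof at the cost of a page or so of triple-product bookkeeping that the paper sidesteps by citation; it also makes transparent why the congruence holds even though the integer identity has nontrivial coefficients.
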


Since the integers represented by either identity are given by quadratic forms, after completing the square an equivalent description for the eta-powers portion of this paper might have been, ``Parity of the number of representations of integers in arithmetic progressions by certain quadratic forms in two or more variables.''

By completing squares, the exponents appearing in the two expressions can be given the following forms modulo $m$, for $m$ coprime to 6 and 2, respectively. 
\begin{align*}
\frac{n}{2}(3n-1) &\equiv 2_m^{-1} \cdot 3\left(n-6_m^{-1}\right)^2 - 24_m^{-1} \pmod{m}; \\
\binom{n+1}{2} &\equiv 2_m^{-1} \left(n+2_m^{-1}\right)^2 - 8_m^{-1} \pmod{m}.
\end{align*}

Here inverses are being taken modulo $m$, hence the necessity for $m$ to be coprime to 6 in the first line, and 2 in the second.

A recent result of Cotron, Michaelsen, Stamm, and Zhu \cite{CMSZ} (hereinafter CMSZ) established lacunarity for an important class of eta-quotients, building on and generalizing seminal works by Serre \cite{Serre} and then Gordon and Ono \cite{GordonOno}. We phrase it in the following terms:
\begin{theorem}[CMSZ \cite{CMSZ}, Theorem 1.1]\label{cot}
Let $F(q)=\frac{\prod_{i=1}^uf_{\alpha_i}^{r_i}}{\prod_{i=1}^tf_{\gamma_i}^{s_i}}$, and assume that
$$\sum_{i=1}^u \frac{r_i}{\alpha_i} \ge \sum_{i=1}^t s_i\gamma_i.$$
Then the coefficients of $F$ are lacunary modulo 2.
\end{theorem}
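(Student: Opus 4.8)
The plan is to deduce the statement from Serre's theorem on the lacunarity modulo a prime of the coefficients of holomorphic modular forms \cite{Serre} (the tool already built upon by Gordon and Ono \cite{GordonOno}), by exhibiting $F$, after one harmless manipulation, as the $q$-expansion of such a form. The engine is entirely classical and I would take it as background; the one genuinely new point to verify is that the hypothesis $\sum_i r_i/\alpha_i \ge \sum_i s_i\gamma_i$ is exactly what guarantees holomorphy at \emph{every} cusp, after which Serre's theorem does the rest.

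First I would pass from $F$ to $F^2$. Writing $F = \sum_n c(n)q^n$, the ``freshman's dream'' modulo $2$ gives $F^2 \equiv \sum_n c(n)q^{2n} \pmod 2$, so the odd coefficients of $F^2$ are precisely those of $F$ with their indices doubled; hence $F$ is lacunary modulo $2$ if and only if $F^2$ is. The point of squaring is that $F^2 = \prod_i f_{\alpha_i}^{2r_i}\big/\prod_i f_{\gamma_i}^{2s_i}$ corresponds, up to the standard overall factor $q^{-e}$ with $e=\tfrac1{24}\big(\sum_i 2\alpha_i r_i - \sum_i 2\gamma_i s_i\big)$, to the eta-quotient $\prod_i \eta(\alpha_i\tau)^{2r_i}\big/\prod_i\eta(\gamma_i\tau)^{2s_i}$, whose weight $\sum_i r_i - \sum_i s_i$ is an \emph{integer}; this sidesteps all half-integral-weight technicalities. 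The hypothesis also forces this weight to be nonnegative: since $\alpha_i,\gamma_i\ge 1$ we have $\sum_i r_i \ge \sum_i r_i/\alpha_i \ge \sum_i s_i\gamma_i \ge \sum_i s_i$.

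The crux is to check that this eta-quotient is holomorphic at every cusp, so that it is a bona fide holomorphic modular form (with some multiplier system/nebentypus) on a group $\Gamma_0(N)$, via Ligozat's cusp-order criterion. The order of $\prod_\delta \eta(\delta\tau)^{R_\delta}$ at a cusp $a/c$ is a positive multiple of $\sum_\delta \tfrac{\gcd(c,\delta)^2}{\delta}R_\delta$. Applying this with $R_{\alpha_i}=2r_i$ and $R_{\gamma_i}=-2s_i$, and using $\gcd(c,\alpha_i)^2/\alpha_i \ge 1/\alpha_i$ together with $\gcd(c,\gamma_i)^2/\gamma_i \le \gamma_i$, the order at every cusp is bounded below by $2\big(\sum_i r_i/\alpha_i - \sum_i s_i\gamma_i\big)\ge 0$. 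Thus the stated inequality is precisely a convenient condition forcing nonnegative orders simultaneously at all cusps (the worst numerator contribution $r_i/\alpha_i$ against the worst denominator contribution $s_i\gamma_i$). As a sanity check, for $F=f_t^3/f_1$ this reads $3/t\ge 1$, i.e.\ $t\le 3$, consistent with Theorem~\ref{smallt0}.

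Having realized $F^2$ as a holomorphic modular form of nonnegative integer weight with integer coefficients (the weight-$0$ case being a constant, for which the claim is immediate), I would invoke Serre's theorem: almost all of its coefficients are even. Lacunarity of $F^2$ modulo $2$, hence of $F$, follows. I expect the main obstacle to be bookkeeping rather than ideas, namely presenting $F^2$ as a genuine modular form to which Serre's density theorem applies: verifying Ligozat's integrality and level conditions and accommodating a possibly nontrivial multiplier system. The cusp-order inequality above is short and is the only place the specific hypothesis of the theorem actually enters.
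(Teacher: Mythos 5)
This statement is quoted verbatim from CMSZ \cite{CMSZ} as background; the paper supplies no proof of its own, so there is no internal argument to compare against. Your sketch is, in substance, a correct reconstruction of the argument actually used in \cite{CMSZ} (building on Serre \cite{Serre} and Gordon--Ono \cite{GordonOno}): pass to an even power of $F$ so that the associated eta-quotient has integral weight while the odd coefficients are merely reindexed, check holomorphy at every cusp via Ligozat's order formula, and invoke Serre's divisibility theorem. Your cusp computation is the right one, and it correctly identifies the hypothesis $\sum_i r_i/\alpha_i \ge \sum_i s_i\gamma_i$ as exactly the uniform-over-cusps bound coming from the extreme values $1/\delta \le \gcd(c,\delta)^2/\delta \le \delta$. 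The one place where ``bookkeeping'' is more than bookkeeping is the multiplier system: the mod-$24$ congruence conditions in Ligozat's criterion need not hold for $F^2$, and they cannot be repaired just by taking higher powers of $2$ (and taking $F^{24}$ destroys the mod-$2$ equivalence you need). The standard resolution, implicit in \cite{CMSZ} and \cite{GordonOno}, is either to multiply by auxiliary quotients such as $f_\delta^{2m}/f_{2\delta}^{m}\equiv 1 \pmod 2$, which have nonnegative order at every cusp since $\gcd(c,2\delta)^2\le 4\gcd(c,\delta)^2$, or simply to observe that an integral-weight eta-quotient holomorphic at all cusps is a genuine holomorphic modular form on $\Gamma_1(M)$ for suitable $M$ (the eta multiplier being a finite-order congruence character), where Serre's theorem still applies; with that supplied, your outline is complete and matches the cited proof.
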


Another classical fact that will be useful in this paper is that the product of two quadratic series is lacunary over $\mathbb{Z}$ (hence, in particular, modulo 2).

\begin{theorem}[Landau \cite{Landau}]\label{Landaulac} Let $a(n)$ and $b(n)$ be nondegenerate quadratic forms in the variable $n$.  Then $$\left( \sum_{n=n_0}^\infty q^{a(n)} \right) \left( \sum_{n=n_1}^\infty q^{b(n)} \right)$$ is lacunary.
\end{theorem}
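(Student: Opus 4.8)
The plan is to reduce the claim to the classical counting result of Landau that a positive definite binary quadratic form represents only a density-zero set of integers. Write $a(n) = a_2 n^2 + a_1 n + a_0$ and $b(n) = b_2 n^2 + b_1 n + b_0$; nondegeneracy means $a_2, b_2 \neq 0$, and the requirement that the exponents be nonnegative integers tending to $+\infty$ (so that the two factors are genuine power series) forces $a_2, b_2 > 0$. The coefficient of $q^N$ in the product is then the representation count
$$r(N) = \#\{(m,n) : m \ge n_0,\ n \ge n_1,\ a(m) + b(n) = N\},$$
which is finite for every $N$ because the form is positive definite. Lacunarity is precisely the assertion that $\#\{N \le x : r(N) > 0\} = o(x)$, i.e., that the set $S = \{N : r(N) > 0\}$ of exponents actually occurring has density zero.

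First I would complete the square in each polynomial, obtaining $a(m) + b(n) = a_2(m-\mu)^2 + b_2(n-\nu)^2 + \kappa$ for fixed rationals $\mu, \nu, \kappa$. Choosing an integer $D \ge 1$ that clears all the denominators involved, every occurring exponent $N$ satisfies a relation
$$DN + c = \alpha X^2 + \beta Y^2$$
for a fixed integer $c$, fixed positive integers $\alpha, \beta$, and integers $X, Y$ (obtained by scaling $m - \mu$ and $n - \nu$). Thus the affine map $N \mapsto DN + c$ sends $S$ injectively into the set $T$ of positive integers representable by the positive definite integral binary quadratic form $Q(X,Y) = \alpha X^2 + \beta Y^2$.

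The key input is then Landau's theorem: the number of positive integers up to $y$ representable by a positive definite integral binary quadratic form is $O(y / \sqrt{\log y})$ (the case $\alpha = \beta = 1$ is Landau's original count of the sums of two squares, and the diagonal forms needed here are covered by the same argument). Hence $|T \cap [1,y]| = o(y)$. Since $N \mapsto DN + c$ is injective and maps $[1,x]$ into $[1, Dx + c]$, we get $|S \cap [1,x]| \le |T \cap [1, Dx + c]| = o(x)$, so $S$ has density zero and the product series is lacunary.

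I expect the main obstacle to be not our reduction but the appeal to the analytic estimate on representable integers, which we take as known. The collisions among the values of $Q$ are genuinely essential: a crude lattice-point count shows there are already $\asymp x$ pairs $(m,n)$ with $a(m) + b(n) \le x$, so the conclusion cannot be obtained by merely bounding the number of representing pairs — it is exactly the discrepancy between the $\asymp x$ representations and the $o(x)$ distinct values that encodes Landau's theorem. The remaining work on our side — completing squares, clearing denominators, and tracking the affine substitution — is routine bookkeeping that manifestly preserves density zero.
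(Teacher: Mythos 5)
Your reduction is correct and is exactly what the paper's bare citation intends: the paper gives no proof of this statement, simply attributing it to \cite{Landau}, so its content is precisely the classical $O\left(y/\sqrt{\log y}\right)$ bound for integers representable by a positive definite binary quadratic form, to which your completing-the-square and denominator-clearing bookkeeping correctly reduces the bivariate product (the passage from sums of two squares to general diagonal forms being the Landau--Bernays refinement you implicitly invoke). Your closing observation that the collisions among values are the essential point, since the raw lattice-point count is already $\asymp x$, is the right sanity check.
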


For example, $f_1^4 f_7^{6}$ is lacunary mod 2, since the first factor is equivalent modulo 2 to $f_4$, with exponents $4(m/2)(3m-1)$, and the latter to $\sum_{n \geq 0} q^{14 \binom{n+1}{2}}$.

\begin{lemma}\label{f1f3-f1f5}  The following congruences hold: \begin{align}f_1^3 &\equiv f_3 + q f_9^3; \label{f13clause} \\ f_1 f_5 &\equiv f_1^6 + q f_5^6; \label{f1f5clause} \\ f_1 f_7 &\equiv \frac{f_1^{22}}{f_7^2} + q f_1^4 f_7^4 + q^6 \frac{f_7^{22}}{f_1^2} \label{f1f7clause}.\end{align}
\end{lemma}

\begin{proof} For the first two claims, see \cite{Hirsch2}, Chapter 1 and \cite{Hirsch}, Equation (13), respectively. The third clause appears as Lemma 3.14 in \cite{Xia}.\end{proof}

The following results concerning quadratic residues can be found in any standard number theory text (see, e.g., \cite{Rosen}).

\begin{lemma}\label{quadres} Let $p$ be an odd prime.  Then: 
\begin{itemize}
\item 2 is a quadratic residue modulo $p$ if and only if $p \equiv 1, 7 \pmod{8}$.
\item 3 is a quadratic residue modulo $p$ if and only if $p \equiv 1, 11 \pmod{12}$.
\item $-1$ is a quadratic residue modulo $p$ if and only if $p \equiv 1 \pmod{4}$.
\end{itemize}
\end{lemma}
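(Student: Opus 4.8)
The plan is to derive all three congruence conditions from Euler's criterion together with the law of quadratic reciprocity and its two supplementary laws, phrased throughout in terms of the Legendre symbol $\left(\frac{a}{p}\right)$. I would begin with the statement for $-1$, which is the most immediate. Euler's criterion gives $\left(\frac{-1}{p}\right) \equiv (-1)^{(p-1)/2} \pmod{p}$; since both sides lie in $\{1,-1\}$ and $p$ is odd, this is an honest equality, and the exponent $(p-1)/2$ is even precisely when $p \equiv 1 \pmod{4}$, which yields the claim.

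Next, for $2$ I would invoke the second supplementary law $\left(\frac{2}{p}\right) = (-1)^{(p^2-1)/8}$. The cleanest route to this is Gauss's lemma: one counts the least residues of $1 \cdot 2, 2 \cdot 2, \dots, \frac{p-1}{2} \cdot 2$ that exceed $p/2$ and checks that the parity of this count matches $(p^2-1)/8 \pmod{2}$. It then remains to observe that $(p^2-1)/8$ is even exactly when $p \equiv \pm 1 \pmod{8}$, i.e. $p \equiv 1, 7 \pmod{8}$, which one verifies directly by substituting $p = 8k + r$ for each residue $r$.

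Finally, for $3$ I would appeal to quadratic reciprocity itself. Since $3 \equiv 3 \pmod{4}$, reciprocity gives $\left(\frac{3}{p}\right) = (-1)^{(p-1)/2}\left(\frac{p}{3}\right)$. The factor $\left(\frac{p}{3}\right)$ equals $+1$ when $p \equiv 1 \pmod{3}$ and $-1$ when $p \equiv 2 \pmod{3}$, while the sign $(-1)^{(p-1)/2}$ depends only on $p \bmod 4$. Combining these two pieces of data by the Chinese Remainder Theorem reduces the product to a condition on $p \bmod 12$, and a short case check over the four admissible residues shows $\left(\frac{3}{p}\right) = 1$ exactly when $p \equiv \pm 1 \pmod{12}$, that is $p \equiv 1, 11 \pmod{12}$.

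Since each claim reduces to textbook facts, there is no genuine obstacle here; the only nontrivial inputs are quadratic reciprocity and the supplementary law for $2$, both of which I would simply cite rather than reprove. The mildly fiddly part is the final CRT bookkeeping in the case of $3$, where one must be careful to translate the condition ``$+1$ iff $p \equiv 1 \pmod{3}$'' together with the $p \bmod 4$ sign into exactly the correct pair of residues modulo $12$.
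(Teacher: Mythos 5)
Your proposal is correct: all three statements follow exactly as you describe from Euler's criterion, the supplementary law for $2$, and quadratic reciprocity, and the case check modulo $12$ for the third clause works out as claimed. The paper gives no proof of this lemma at all, simply citing a standard number theory text (Rosen), and your argument is precisely the standard textbook derivation that citation points to.
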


\section{Proofs for $g_t(n)$}\label{gtproofs}

\begin{proof}[Proof of Theorem \ref{smallt0}] This result follows immediately from CMSZ, Theorem \ref{cot}.  It was also recorded in \cite{ArichetaEtAl}.
\end{proof}

\begin{proof}[Proof of Theorem \ref{inherited}] The listed progressions are all known to be even for $t$-regular partitions with progression modulus $2t$ (see the table in \cite{KZ}). Any such progression is inherited by $g_t(n)$.  Since $$G_t(q) \equiv \frac{f_t}{f_1} \cdot f_{2t},$$ it holds that if the $t$-regular partitions possess any even progressions of the form $b_t(2tn+j)$, then $g_t(2tn+j)$ is also even, since the elements of the latter sequence can be written as a recurrence in the elements of the former.  Using the pentagonal number theorem and equating powers, we obtain $$g_t(n) = b_t(n) - b_t(n-2t) - b_t(n-4t) + b_t(n-10t) + b_t(n-14t) - \dots $$ 
\end{proof}

\begin{proof}[Proof of Theorem \ref{g19}] This is proved by an appeal to the theory of modular forms, for which we direct the reader to any standard text or to the earlier papers in this series \cite{KZ,KZ2}. We provide here a sketch of the proof.

We construct the eta-quotient $$D(q) = q^{21} \frac{f_8^{18}f_{16}^{19} f_{38} f_{19}}{f_1} =: \sum_{n=21}^\infty d(n) q^n.$$  The reader will note that the parity of $d(n)$ can be written as a recurrence in the values $g_{19}(n-16k-21)$: $$D(q) \equiv G_{19}(q) \cdot q^{21} f_{16}^{28}.$$  Hence, $g_{19} (16n+11) \equiv 0 \pmod{2}$ for all $n$ if and only if the same is true for $d(16n)$.

Standard theorems yield that $D(q)$ is a holomorphic modular form of weight 252 on the group $\Gamma_0(2432)$ with character $\chi(d) = \left( \frac{2^{126}19^2}{d} \right)$.  The Sturm bound for this space is 80640.  Since $16 \, \vert \, 2432$, $$D(q) \vert U(16) = \sum_{n=0}^\infty d(16n) q^n$$ is also a modular form in this space with the same character.

We have that $16 \cdot 80640 = 1290240$.  Thus, if $g_{19}(16n+11) \equiv 0 \pmod{2}$ for $n \leq 80640$, which we will determine by calculating $g_{19}(n)$ for all $n\le1290251$, then $g_{19}(16n+11) \equiv 0 \pmod{2}$ for all $n$.

The required calculation was performed with \emph{Mathematica} on a desktop computer, and the necessary data was confirmed, proving the claim.
\end{proof}

\begin{proof}[Proof of Theorem \ref{g147}] This theorem is shown analogously to the previous one, hence we omit the details.  The necessary form is $$D_2(q) = q^{65} \frac{f_{147}^3 f_{28}^{40}}{f_1} \vert U(7) \vert U(4).$$
\end{proof}

\begin{proof}[Proof of Theorem \ref{longlist}] 
Aricheta \cite{ArichetaEtAl} established, among other facts, the following identities: 
\begin{align*}
\sum_{n=0}^\infty g_9(2n+1) q^n &\equiv f_1 \frac{f_9^3}{f_3}; \\
\sum_{n=0}^\infty g_{11}(2n) q^n &\equiv \frac{f_6^3}{f_2} + q^3 \frac{f_{33}^3}{f_{11}}; \\
\sum_{n=0}^\infty g_{15}(4n+2)q^n &\equiv q f_{15}^3 f_2. \\
\end{align*}

An analysis of the quadratic forms in the exponents yields the claim.  Briefly, one completes squares modulo $p^2$, and then finds that certain progressions must be avoided when some value is not a quadratic residue modulo $p$.  When multiple summands are given, one determines progressions avoided by each term.

For $t=9$ and given a prime $p>3$, we see that $g_9(2n+1)$ has an odd exponent only if $n$ can be written in the form $$n \equiv \frac{k}{2}(3k-1) + 3 (3j^2 - 2j) \equiv 3 \cdot 2^{-1} x^2 + 9 y^2 - 24_p^{-1} - 1\pmod{p},$$ for some integers $x, y$.  

When more specifically we have $n \equiv -24_{p^2}^{-1} - 1 \pmod{p^2}$, it may hold that there is a solution with $x, y \equiv 0 \pmod{p}$, but if either $x$ or $y$ is nonzero modulo $p$, then both must be.  In that case, we may write $$\left(\frac{x}{y}\right)^2 \equiv -6 \pmod{p}.$$ But this congruence has no solutions if $-6$ is a quadratic nonresidue modulo $p$.

The $t=11$ clause of the theorem is proved by individual analysis of the two terms of the second identity above; in this case, we find that a representable $n$ must be of the form $2(3k^2-2k) \equiv 6x^2 - 2 \cdot 3_{p^2}^{-1}\pmod{p^2}$, or $33 y^2 - 2 \cdot 3_{p^2}^{-1} \pmod{p^2}$.  But if 22 is a quadratic residue modulo $p$, then 2 and 11 are both simultaneously quadratic residues or nonresidues, and so the sets $\{2x^2\}$ and $\{11y^2\}$ coincide.  But then, since each set contains $(p+1)/2$ elements, the other residues must be missed.

The third ($t=15$)  clause of the theorem is proved completely analogously to the first.
\end{proof}

\begin{remark} In the $t=11$ clause, if 22 is not a quadratic residue modulo $p^2$, then the two sets of values $6x^2 - 2 \cdot 3_{p^2}^{-1}$ and $33 y^2 - 2 \cdot 3_{p^2}^{-1}$ are disjoint except at $-2 \cdot 3_{p^2}^{-1}$, meaning they represent all residue classes modulo $p$ separately.  This argument then cannot establish any congruences of the same sort at all.  However, further congruences almost certainly exist and can likely be shown by additional analysis of the separate collections.
\end{remark}

\begin{proof}[Proof of Theorem \ref{equaldensities}] The first two clauses, for $G_5$, follow from multiplying through (\ref{f1f5clause}) by $f_5^2/f_1^2$ and $1/f_1^2$, respectively.  In the former case, we obtain $$\frac{f_5^3}{f_1} \equiv f_1^4 f_5^2 + q \frac{f_5^8}{f_1^2}.$$  The term on the left is $G_5(q)$.  The first term on the right is lacunary by Landau's lacunarity theorem (Theorem \ref{Landaulac}), and the third term is equivalent to $\frac{f_{40}}{f_2}$, the $q \rightarrow q^2$ magnification of $\frac{f_{20}}{f_1}$.  In the latter case, $$\frac{f_5}{f_1} \equiv f_1^4 + q \frac{f_5^6}{f_1^2}.$$  The term on the left is the generating function for the 5-regular partitions.  The first term on the right is equivalent to $f_4$, and hence immediately lacunary since it is a quadratic series.  The second term on the right is a shift and magnification of $G_5(q)$: $$q \frac{f_5^6}{f_1^2} \equiv q G_5(q^2).$$  The claim follows from extracting the odd-power terms on both sides.  By extracting even powers one immediately obtains the well-known fact that $b_5(2n)$ is lacunary, since it is odd if and only if $n$ is twice a pentagonal number.

The fifth clause of the theorem likewise follows by multiplying through identity (\ref{f1f5clause}) by $1/f_1$.  We obtain $$f_5 \equiv f_1^5 + q \frac{f_5^6}{f_1}.$$  The second term on the right is equivalent to $q G_{10}(q)$.  Since $f_1^5 \equiv f_1 f_4$ is lacunary by Theorem \ref{Landaulac} and $f_5$ is immediately lacunary, and the sum of two lacunary series is lacunary, the claim follows.

The third clause of the theorem follows from multiplying through identity (\ref{f1f7clause}) by $f_7^2/f_1^2$.  We obtain, as claimed, $$\frac{f_7^3}{f_1} \equiv f_1^{20} + q f_1^2 f_7^6 + q^6 \frac{f_7^{24}}{f_1^4}.$$  The density result follows since $f_1^{20} \equiv f_{16} f_4$ and hence the first two terms are lacunary, while the third term is equivalent to $q^6$ times the $q \rightarrow q^4$ magnification of $G_{14}(q)$.

The fourth clause of the theorem is proved by multiplying through identity (\ref{f13clause}) by $1/f_1$ and rearranging terms.  We obtain $$q \frac{f_9^3}{f_1} \equiv f_1^2 + \frac{f_3}{f_1}.$$ The left side is $q G_9(q)$, the first term on the right is lacunary, while the second term generates the 3-regular partitions.  Hence the claim follows.

Finally, the last clause of the theorem is shown using the substitution $q \rightarrow q^2$ in the first clause of Lemma \ref{f1f3-f1f5}, which yields $$f_1^6 \equiv f_3^2 + q^2 f_9^3.$$  Dividing through by $f_1$ and contracting the square on the terms on the right now gives $$f_1^5 \equiv \frac{f_6}{f_1} + q^2 \frac{f_{18}^3}{f_1},$$ which proves the claim.
\end{proof}

\begin{remark}
Any other identity of the type in Lemma \ref{f1f3-f1f5} could be similarly manipulated to obtain implications analogous to those presented here. We also remark that one can easily obtain many congruences satisfied by $g_{t}$ from the identities thus proved.  For instance, $g_5(2n)$ must be even unless $2n$ is 4 times a pentagonal number plus 10 times a pentagonal number, a result that implies numerous even linear progressions by analysis of quadratic residues.
\end{remark}

\begin{proof}[Proof of Theorem \ref{ThreeT}]  By making the substitution $q \rightarrow q^t$ in identity (\ref{f13clause}) and dividing through by $f_1$, we obtain the expansion $$G_t = \frac{f_t^3}{f_1} \equiv \frac{f_{3t}}{f_1} + q^t \frac{f_{9t}^3}{f_1}.$$

If $2^\alpha > t_0 / 3$, then $\delta^{(t)} = 0$ by CMSZ.  Thus, the left side of the congruence, $f_t^3/f_1$, has density 0.  It follows that $\delta^{(9t)} = \delta^{[3t]}$ (assuming existence), and we almost always have that $g_{9t}(n) \equiv b_{3t}(n+t)$, as desired. Note that if, moreover, $2^\alpha > 3t_0$, then by CMSZ all three terms of the congruence have density 0. 
\end{proof}


\begin{proof}[Proof of Theorem \ref{FiveT}]  The logic is similar to the previous theorem.  From Lemma \ref{f1f3-f1f5}, we obtain the congruence $$f_1 f_5 \equiv f_1^6 + q f_5^6.$$ Since $t$ is even, make the substitution $q \rightarrow q^{t/2}$, and replace $f_{t/2}^6$ by $f_t^3$ and $f_{5t/2}^6$ by $f_{5t}^3$.  Isolate $f_{t}^3$ and divide through by $f_1$.

This yields $$G_t \equiv q^{t/2} \frac{f_{5t}^3}{f_1} + \frac{f_{t/2} f_{5t/2}}{f_1}.$$  Therefore, when $t_0 / 3 < 2^\alpha$, the left side has density 0 by CMSZ. Hence the coefficients of $q^{t/2} G_{5t}$ are almost always of the same parity as those of $\frac{f_{t/2}f_{5t/2}}{f_1}$, and the theorem follows. \end{proof}

We propose the following conjecture.

\begin{conjecture}\label{nontrivfivehalves} If $2^\alpha < 5t_0/3$, $\alpha > 0$, then both $$\frac{f_{5t}^3}{f_1} {\ }{\ }\textnormal{and}{\ }{\ } \frac{f_{t/2} f_{5t/2}}{f_1}$$ have density $1/2$. \end{conjecture}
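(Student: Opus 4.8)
The plan is to recognize that the first series is nothing but $G_{5t} = f_{5t}^3/f_1$, and that the hypothesis $2^\alpha < 5t_0/3$ is \emph{exactly} the negation of the lacunarity condition $3 \cdot 2^\alpha \ge 5t_0$ that Theorem \ref{smallt0} (via CMSZ, Theorem \ref{cot}) imposes on $G_{5t}$, since $5t = 2^\alpha(5t_0)$ with $5t_0$ odd. Thus the conjecture asserts precisely that, once the CMSZ bound fails to force density $0$, the density jumps to $1/2$; this is the master conjecture (Conjecture \ref{mainconj}) specialized to this eta-quotient. I would first dispose of the two-series issue using the congruence underlying Theorem \ref{FiveT}, namely $G_t \equiv \frac{f_{t/2}f_{5t/2}}{f_1} + q^{t/2}\frac{f_{5t}^3}{f_1}$. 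In the subrange $t_0/3 < 2^\alpha$, Theorem \ref{smallt0} gives $\delta^{(t)} = 0$, so $g_t(n)$ vanishes for almost all $n$; the congruence then forces the coefficients of $\frac{f_{t/2}f_{5t/2}}{f_1}$ and of $q^{t/2}G_{5t}$ to agree modulo $2$ for almost all $n$. Hence these two series share a common density, and in that subrange the conjecture reduces to the single statement $\delta^{(5t)} = 1/2$. (In the complementary subrange $2^\alpha \le t_0/3$ the reduction breaks down and the two series must be treated separately, though each remains an instance of the master conjecture in view of its own CMSZ bound.)

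It then remains to bound $\delta^{(5t)}$ from above and below. For the upper bound $\delta^{(5t)} \le 1/2$, the strategy is to manufacture a family of even arithmetic progressions of total density $1/2$: following the method used for Theorem \ref{g19} and the congruence lists of this paper, one iterates dissection identities of the type in Lemma \ref{f1f3-f1f5}, completes the square in the resulting exponents modulo $p^2$ for a supply of primes $p$, and reads off forbidden residue classes from quadratic nonresidues via Lemma \ref{quadres}. The aim is to show that these obstructions accumulate to density $1/2$; controlling their union quantitatively is already delicate, since one must rule out overlaps and argue that the complement cannot be enlarged.

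The genuinely hard direction, and the main obstacle, is the lower bound $\delta^{(5t)} \ge 1/2$ -- or even the weaker $\delta^{(5t)} > 0$. This is the analog, for $G_{5t}$, of the long-open assertion that a positive proportion of partition numbers is odd, and it cannot be extracted from CMSZ-type or quadratic-residue arguments, which only ever \emph{bound densities from above}. The natural line of attack is to attach to $G_{5t}$ a holomorphic modular form modulo $2$, as in the proof of Theorem \ref{g19}, and to study the Hecke action on it: positivity of the density would follow from showing the form is not annihilated by any power of the Hecke operators (non-nilpotence), whereas the \emph{exact} value $1/2$ would require the full equidistribution predicted by the master conjecture. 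I expect that establishing non-nilpotence -- hence positive density -- may be within reach for individual $t$ through an explicit eigenform computation, but that proving the density is precisely $1/2$ uniformly in $t$ is equivalent in difficulty to the master conjecture itself and is the true crux. A realistic partial outcome of this plan is therefore the (conditional) equality of the two densities together with one of the two inequalities, with the sharp lower bound left open.
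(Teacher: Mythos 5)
The statement you were asked to prove is labeled a \emph{conjecture} in the paper and is left open there: the authors supply no proof, and your proposal, candidly, does not supply one either. So the honest verdict is that there is no gap to find in the paper's argument and no complete argument in yours; what you have written is a correct situational analysis rather than a proof. The parts of your plan that can actually be carried out are already in the paper. Your reduction of the two-series statement to the single claim $\delta^{(5t)}=1/2$ via the congruence $G_t \equiv q^{t/2}\frac{f_{5t}^3}{f_1} + \frac{f_{t/2}f_{5t/2}}{f_1}$, valid when $t_0/3 < 2^\alpha$ so that CMSZ kills the left side, is precisely Theorem \ref{FiveT} and its proof; and your observation that the hypothesis $2^\alpha < 5t_0/3$ is exactly the failure of the CMSZ bound for $G_{5t}$ is the correct reading of why the conjecture is stated with that threshold. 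You are also right to flag that in the complementary subrange $2^\alpha \le t_0/3$ even the equality of the two densities is not established, a point the conjecture's statement glosses over.

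Two cautions on the speculative parts. First, your upper-bound strategy of accumulating even arithmetic progressions until they exhaust density $1/2$ is not merely ``delicate'' but almost certainly the wrong mechanism: the known congruences for these eta-quotients have total density far below $1/2$, and the paper elsewhere conjectures that some $G_t$ with density $1/2$ admit \emph{no} even progressions at all, so $\delta_F \le 1/2$ cannot in general be a union-of-congruences phenomenon. Second, the Hecke non-nilpotence route would at best yield $\delta^{(5t)}>0$, not the exact value $1/2$; as you say, the sharp statement is an instance of Conjecture \ref{mainconj} and is of the same order of difficulty as the Parkin--Shanks circle of problems. Your closing assessment --- conditional equality of the two densities plus at most one inequality, with the crux left open --- is an accurate description of the state of the art, which is exactly why the paper records this as a conjecture.
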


\section{Eta-power proofs}\label{etaproofs} We now include proofs of our eta-power results.  A fact stated but not proved is either trivial or in previous literature.

We begin with a short proof of Theorem \ref{onepent}, which does not require the theta-function machinery of Chen's argument in \cite{Chen2}.  A similar proof can be written for Theorem \ref{onetri}.

\begin{proof}[Proof of Theorem \ref{onepent}]
We have
$$f_1 = \sum_{n=0}^\infty c_1(n) q^n \equiv \sum_{n \in \mathbb{Z}} q^{\frac{n}{2}(3n-1)}.$$
Since for $m$ coprime to 6 we may write
$$\frac{n}{2}(3k-1) \equiv 2_m^{-1} \cdot 3 \left(k-6_m^{-1}\right)^2 - 24_m^{-1} \pmod{m},$$
should $mn+B$ not be of this form for any $n$, it will then hold that $c_1(mn+B)$ is identically 0 mod 2 on the progression. Solving throughout for the square, we obtain:
\begin{align*} mn+B &\equiv 2_m^{-1} \cdot 3 \left(k-6_m^{-1}\right)^2 - 24_m^{-1} \pmod{m} ; \\
2 \cdot 3_m^{-1} B + 36_m^{-1} &\equiv \left(k-6_m^{-1}\right)^2 \pmod{m}.
\end{align*}

Therefore, if $2 \cdot 3_m^{-1} B + 36_m^{-1}$ is not a quadratic residue mod $m$, $f_1$ is identically even on $mn+B$. 

Conversely, if $2 \cdot 3_m^{-1} B + 36_m^{-1}$ is a quadratic residue mod $m$, then there exists some $k-6_m^{-1}$ for which the congruence holds. Thus, $c_1(mn+B)$ is not identically 0 on the progression.
\end{proof}

We include here a simple proof of Theorem \ref{twopowers}, and specifically clause 2 of Theorem \ref{primeclasses}.  We also give an explicit description of the primes arising in Chen's theorems in \cite{Chen2}.

\begin{proof}[Proof of Theorem \ref{twopowers}]
The numbers $t = a + b \cdot 2^e$, $a, b \in \{1,3\}$, are precisely the odd values of $t$ for which we may write
$$f_1^t \equiv f_{1 \text{ or } 3} \cdot f_{1 \text{ or 3}}^{2^e}.$$

Therefore, $t$ can only be odd if it is representable as the sum of two quadratics, one either the pentagonal or the triangular numbers, and the other (independently) the $2^e$-magnified pentagonal or triangular numbers.  (Or an odd number of the unmagnified quadratic, but here we focus on those $t$ that can be written with one of each.)

Suppose $a=b=1$, so that $t = 2^e+1$.  Then we have two pentagonal progressions, one magnified, and the terms $N$ appearing with nonzero coefficient in their product must satisfy, for some $k_1, k_2 \in \mathbb{Z}$,
$$2_{p^2}^{-1} \cdot 3 \left(k_1-6_{p^2}^{-1}\right)^2 - 24_{p^2}^{-1} + (2^e) \left( 2_{p^2}^{-1} \cdot 3 (k_2-6_{p^2}^{-1})^2 - 24_{p^2}^{-1} \right) \equiv N \pmod{p^2}.$$

Let $N$ be of the form $p^2 + kp - 24_{p^2}^{-1} - 2^e 24_{p^2}^{-1},$ $0 < k < p$.  Simplify the notation by letting $x = k_1 - A$ and $y = k_2 - A$, where $A$ is a suitable integer representing $6_{p^2}^{-1}$.  Then we have: 
\begin{align*}
2_{p^2}^{-1} \cdot 3 x^2 - 24_{p^2}^{-1} + (2^e) \left( 2_{p^2}^{-1} \cdot 3 y^2 - 24_{p^2}^{-1} \right) &\equiv kp- 24_{p^2}^{-1} - 2^e 24_{p^2}^{-1} \pmod{p^2} ; \\
2_{p^2}^{-1} \cdot 3 x^2 + \left(2^{e-1}\right) \cdot 3 y^2  &\equiv kp  \pmod{p^2} .
\end{align*}

Now $2_{p^2}^{-1} \cdot 3 x^2 + \left(2^{e-1}\right) \cdot 3 y^2 \equiv 0 \pmod{p}$, so if $x$ or $y$ is divisible by $p$, the other must be as well, but then $2^{-1} \cdot 3 x^2 + \left(2^{e-1}\right) \cdot 3 y^2 \equiv 0 \pmod{p^2}$, which is false since $p \nmid k$.  Hence $x, y \not\equiv 0 \pmod{p}$, and we may write
$$\left( \frac{x}{y} \right)^2 \equiv -2^e \pmod{p}.$$ 

It follows that if $-2^e$ is not a quadratic residue modulo $p^2$, the chosen arithmetic progression $p^2 + kp - 24_{p^2}^{-1} - 2^e 24_{p^2}^{-1}$ with $0 < k < p$ cannot have nonzero coefficients mod 2 in $f_1^t$.

The other cases are similar (and as we have mentioned, are also proved in Chen \cite{Chen2}).  If $a = b = 3$, there is no factor of 3 to cancel in the first place; if $a \neq b$, then we have one, which by properties of quadratic residues may be 3 or $3^{-1}$, without loss of generality.  Only the resulting $r$ is different.  \end{proof}

\begin{proof}[Proof of Theorem \ref{primeclasses}]
We prove clause 1 and $1^{\prime}$.  The arguments for the remaining cases are similar.

Let $t = 2^d + 3$.  We can easily calculate that $-\left(2^{d-3}3_{p^2}^{-1} + 2^{-3}\right) \pmod{p^2}$ is the necessary value of $r$.  We are concerned with when $-3 \cdot 2^d$ is a quadratic nonresidue modulo $p$.

If $p \equiv 23 \pmod{24}$, then 2 is a quadratic residue modulo $p$, as is 3, but $-1$ is a quadratic nonresidue, so $-3 \cdot 2^d$ is always a nonresidue and the claim holds.

If $d$ is even, then $2^d$ is always a quadratic residue, and so we are only concerned with when $-3$ is a nonresidue.  Since $-1$ is a nonresidue mod $p$ if and only if $p \equiv -1 \pmod{4}$, and 3 is a residue if and only if $p \equiv 1, 11 \pmod{12}$, then for $p \equiv 5 \pmod{6}$, either $-1$ is a nonresidue and 3 a residue (when $p \equiv 11 \pmod{12}$) or $-1$ is a residue and 3 a nonresidue ($p \equiv 5 \pmod{12}$).

If $d$ is odd, then we wish $-6$ to be a quadratic nonresidue, and in addition to the case $p \equiv 23 \pmod{24}$ given above, we may also take $p \equiv 13 \pmod{24}$, for which 2 is a nonresidue while $-1$ and 3 are residues. This concludes the proof.
\end{proof}

\begin{proof}[Proof of Corollary \ref{fixedprime}]
We make use of Lemma \ref{quadres}.

If $p \equiv 3, 5 \pmod{8}$, then 2 is a quadratic nonresidue modulo $p$, and so among $-2^e$ and $-3 \cdot 2^e$, half of the values will be nonresidues modulo $p$ and the hypotheses of Theorem \ref{twopowers} are satisfied.

If $p \equiv 7 \pmod{8}$, then $-1$ is a nonresidue mod $p$ and 2 is a residue, so $-2^e$ is always a nonresidue mod $p$.

If $p \equiv 1 \pmod{8}$, then $-1$ and 2 are residues mod $p$, but if $p \equiv 17 \pmod{24}$ then 3 is a nonresidue. Hence $-3 \cdot 2^e$ is always a nonresidue as well.

\end{proof}

\begin{remark}
Note that, consistently with Conjecture \ref{allprimes}, some congruences also appear to exist for primes $p \equiv 1 \pmod{24}$, even though we cannot establish them with the arguments of this paper. For instance, numerical calculations suggest that $f_1^5$ is $73^2$-even with base 1110.
\end{remark}

\begin{proof}[Proof of Corollary \ref{p5cor}]
Here $d$ is even and we are considering $p=5$, so the hypotheses of clause $1^\prime$ of Theorem \ref{primeclasses} are satisfied.  We have $t = 2^{2k}+3$, and seek the value of $-\left(2^{2k-3}3_{25}^{-1} + 2_{25}^{-3}\right) \pmod{25}$.  But observe that $3_{25}^{-1} \equiv -8 \pmod{25}$. Hence
$$r \equiv -\left(2^{2k-3}3_{25}^{-1} + 2_{25}^{-3}\right) \equiv -\left(2^{2k-3}(-1)2^3 - 3\right) \equiv 2^{2k}+3 \equiv t \pmod{25},$$
as desired.

For $p=3$, simply note that $-1 \equiv 2^3 \pmod{9}$ and the calculations follow.
\end{proof}

\begin{proof}[Proof of Theorem \ref{dminus1}]
It is known by a result of Radu \cite{RaduNoEvens}, completing work by Subbarao, Ono, and other authors, that there exists no progression $An+B$, $A \neq 0$, for which $p(An+B) \equiv 0 \pmod{2}$ for all $n$.

We have that $$f_1^{2^d - 1} = \frac{f_1^{2^d}}{f_1} \equiv \frac{f_{2^d}}{f_1}.$$ But then all coefficients $c_{2^d-1}(n)$ of $f_1^{2^d - 1}$ up to $n = 2^d-1$ must match the parity of the partition number $p(n)$.  Now consider any progression $An+B$, $A \neq 0$, as $d$ grows.  If this is always even, then an indefinitely long initial segment of $c_{2^d-1}(An+B)$ must also be even, but by Radu, eventually $p(An+B)$ contains an odd entry, and hence so does $c_{2^d-1}(An+B)$.
\end{proof}

\begin{proof}[Proof of Theorem \ref{t21}]  We observe that $t=21 = 1 + 4 + 16$ is the smallest number not representable by two terms as in Theorem \ref{primeclasses}, and therefore a different proof is required.

The claim follows from establishing the congruence $$q \sum_{n=0}^\infty c_{21}(7n) q^{8n} \equiv q f_8^3 + q^{49} f_{56}^{21}.$$

The actual proof is a standard exercise in the theory of modular forms, so we omit the details.  For all relevant machinery, see the background section of \cite{KZ} and employ the $U(7)$ operator from \cite{OnoWeb} on $q^7 f_8^{21}$.
\end{proof}

\section{Future directions}

We recall our ``master conjecture'' for this series of papers, which posits an ``all or nothing'' behavior for the parity of eta-quotients.

\begin{conjecture}[\cite{KZ}, Conjecture 4]\label{mainconj}
Let $F(q)=\sum_{n\ge 0} c(n)q^n$ be an eta-quotient, shifted by a suitable power of $q$ so powers are integral, and denote by $\delta_F$ the odd density of its coefficients $c(n)$. We have:

i) For any $F$, $\delta_F$ exists and satisfies $\delta_F\le 1/2$.

ii) If $\delta_F= 1/2$, then for any nonnegative integer-valued polynomial $P$ of positive degree, the odd density of $c(P(m))$ is $1/2$. (In particular, $c(Am+B)$ has odd density $1/2$ for all arithmetic progressions $Am+B$, $A\neq 0$.)

iii) If $\delta_F< 1/2$, then the coefficients of $F$ are identically 0 (mod 2) on \emph{some} arithmetic progression. (Note that this is not even \emph{a priori} obvious when $\delta_F=0$.)

iv) If the coefficients of $F$ are not identically 0 (mod 2) on \emph{any} arithmetic progression, then they have odd density $1/2$ on \emph{every} arithmetic progression; in particular, $\delta_F= 1/2$.\\(Note that i), ii), and iii) together imply iv), and that iv) implies iii).)
\end{conjecture}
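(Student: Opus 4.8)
The statement to establish is the parenthetical note closing Conjecture \ref{mainconj}: that clauses i), ii), iii) together imply iv), and that iv) implies iii). Both are purely formal deductions from the four clauses, so the plan is to verify each by contraposition, taking care only to invoke the existence of $\delta_F$ guaranteed by clause i) wherever a comparison of $\delta_F$ with $1/2$ is made.

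First I would prove that iv) implies iii). Clause iv) asserts that if the coefficients of $F$ are not identically $0$ (mod 2) on any arithmetic progression, then $\delta_F = 1/2$. Taking the contrapositive of this conclusion, if $\delta_F \neq 1/2$ then the coefficients must be identically $0$ (mod 2) on \emph{some} progression. Since the hypothesis of iii) is $\delta_F < 1/2$, which in particular gives $\delta_F \neq 1/2$, the conclusion of iii) follows at once. Thus iv) $\Rightarrow$ iii) amounts to nothing beyond restating the contrapositive of the weakest half of iv).

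Next I would prove that i), ii), iii) jointly imply iv). Assume all three clauses hold, and suppose the coefficients of $F$ are not identically $0$ (mod 2) on any arithmetic progression. By the contrapositive of iii), this rules out $\delta_F < 1/2$, i.e.\ it forces $\delta_F \geq 1/2$; here clause i) is essential, since it guarantees that $\delta_F$ exists so that the inequality is meaningful. Combining $\delta_F \geq 1/2$ with the bound $\delta_F \leq 1/2$ of i) yields $\delta_F = 1/2$. Clause ii) then applies: with $\delta_F = 1/2$, the odd density of $c(P(m))$ equals $1/2$ for every nonnegative integer-valued polynomial $P$ of positive degree, and in particular $c(Am+B)$ has odd density $1/2$ on every arithmetic progression $Am+B$ with $A \neq 0$. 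This is exactly the conclusion of iv), completing the implication.

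The argument is genuinely a matter of bookkeeping among the four clauses rather than of analytic or modular-forms content; the only point demanding care—and the closest thing to an obstacle—is the role of clause i). Because iii) and iv) are phrased as a dichotomy between $\delta_F < 1/2$ and $\delta_F = 1/2$, the deductions are valid only once the existence of $\delta_F$ (together with the bound $\delta_F \leq 1/2$, which collapses the trichotomy to these two cases) has been secured. Omitting i) would leave open the possibility that $\delta_F$ fails to exist, in which case neither the hypothesis of iii) nor the conclusion of iv) is well-posed and the chain of implications breaks.
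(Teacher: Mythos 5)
Your verification of the parenthetical logical claims is correct and is exactly the intended (and essentially only) argument: the paper states this as a conjecture and offers no proof beyond asserting the parenthetical, whose content is precisely the propositional bookkeeping you carry out. Your observation that clause i) is needed to make the dichotomy $\delta_F < 1/2$ versus $\delta_F = 1/2$ well-posed is the right point of care and matches the role i) plays in the paper's formulation.
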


We noted in Theorem \ref{smallt0} that when $G_t(q)$ satisfies the conditions of CMSZ, its density is 0. When $t$ is even, we believe that the converse also holds, in the following strong form. (Aricheta \emph{et al.} also made this conjecture in \cite{ArichetaEtAl}.)

\begin{conjecture} Let $t= t_0 2^{\alpha}$  be even, with $t_0$ odd, $t_0 > 3 \cdot 2^{\alpha}$ (i.e., $G_t(q)$ does not satisfy the conditions of CMSZ). Then $\delta^{(t)} = 1/2$.
\end{conjecture}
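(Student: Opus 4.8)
The plan is to reduce the statement to a single eta-quotient of pure-power type and then attack the resulting density assertion through the theory of modular forms modulo $2$. Since $t$ is even, repeated use of $f_{2s}\equiv f_s^2$ gives $f_t\equiv f_{t_0}^{2^\alpha}$, whence
\[
G_t=\frac{f_t^3}{f_1}\equiv \frac{f_{t_0}^{\,3\cdot 2^\alpha}}{f_1}=:F.
\]
Writing $s=3\cdot 2^\alpha$, the hypothesis $t_0>3\cdot 2^\alpha$ is exactly the statement $s<t_0$, i.e.\ the failure of the CMSZ inequality $\sum r_i/\alpha_i\ge\sum s_i\gamma_i$, which for $F$ reads $s/t_0\ge 1$. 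So the conjecture becomes: the eta-quotient $F=f_{t_0}^{s}/f_1$ with $s<t_0$ has odd density exactly $1/2$. This reformulation isolates the two numerical parameters and exhibits the conjecture as the sharp converse to Theorem \ref{smallt0}.

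I would then split the target $\delta^{(t)}=1/2$ into a lower and an upper bound. For the lower bound it suffices to show that $F$ is \emph{not} lacunary modulo $2$; by Serre's characterization of lacunary modular forms modulo $2$ (refined in CMSZ, Theorem \ref{cot}), this amounts to showing that $F$, viewed as the reduction of a weakly holomorphic modular form, is not a finite $\mathbb{F}_2$-combination of complex-multiplication and theta series. When $s<t_0$ the ``weight'' of $F$ is too large to be accounted for by such quadratic series, so one expects $\delta^{(t)}>0$; but making this rigorous requires controlling the CM part of the mod-$2$ Hecke module generated by $F$, and here the self-similar dissection identities in the spirit of Lemma \ref{f1f3-f1f5} (for instance $G_t\equiv f_{3t}/f_1+q^tf_{9t}^3/f_1$, used in the proof of Theorem \ref{ThreeT}) would supply the inductive handle, peeling off demonstrably lacunary summands at each step.

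The hard part, and the genuine obstacle, is pinning the density to the exact value $1/2$ rather than to merely some positive number. This is the content of parts (i) and (iv) of the master Conjecture \ref{mainconj} for the family $F$, and it is of the same order of difficulty as the Parkin--Shanks conjecture itself: no finite computation can certify a density-$1/2$ statement, in contrast to the vanishing results of Theorems \ref{g19}--\ref{t21}, which are settled against an explicit Sturm bound, and the available equidistribution inputs fall short of the exact fraction. My plan here would be to clear the pole at $\infty$ by multiplying $qF$ by a suitable $\eta$-power, realize the product as a genuine holomorphic modular form modulo $2$, locate its image in the local Nicolas--Serre Hecke algebra, and read off the attached mod-$2$ Galois representation $\bar\rho$; density $1/2$ should then follow from a Chebotarev computation showing that the conjugacy classes of $\mathrm{image}(\bar\rho)$ carrying nonzero trace have total density $1/2$. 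The obstruction is precisely that we cannot at present guarantee the representation is large enough (non-dihedral and non-lacunary) and that its nonvanishing locus equidistributes to the required $1/2$; overcoming this appears to demand essentially new arithmetic input, which is why we state the result only as a conjecture.
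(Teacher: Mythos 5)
The statement you were asked to prove is a \emph{conjecture}: the paper offers no proof of it, and neither do you --- as you yourself concede in your final sentence. So the honest verdict is that there is no argument here to check against the paper's, only a reformulation and a research program. Your opening reduction is correct and consistent with how the paper frames the problem: since $f_{2s}\equiv f_s^2$, one has $G_t\equiv f_{t_0}^{3\cdot 2^\alpha}/f_1$, and the CMSZ inequality for this quotient is exactly $3\cdot 2^\alpha/t_0\ge 1$, so the hypothesis $t_0>3\cdot 2^\alpha$ is precisely the failure of Theorem \ref{cot}; this correctly exhibits the conjecture as the proposed converse to Theorem \ref{smallt0}. Your diagnosis of the two obstacles is also accurate, and it is worth being explicit about how severe each one is.

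First, even the lower bound $\delta^{(t)}>0$ (non-lacunarity mod $2$) is not something the paper, or the literature it cites, can establish for a single eta-quotient with a genuine denominator outside the CMSZ regime: note that via clause (3) of Theorem \ref{equaldensities}, $\delta^{(14)}>0$ would already give $\delta^{(7)}>0$, and positivity results of this kind are exactly the sort of statement that Theorem \ref{ZanDensities} can only propagate conditionally. Your appeal to ``Serre's characterization'' is also not directly applicable as stated: that characterization concerns holomorphic integer-weight forms, whereas $f_{t_0}^{s}/f_1$ with $s<t_0$ is only weakly holomorphic, and the dissection identities of Lemma \ref{f1f3-f1f5} peel off lacunary pieces but never a piece whose non-lacunarity is known. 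Second, and more fundamentally, no technique in this subject currently certifies an odd density of \emph{exactly} $1/2$ for any eta-quotient; that is the content of parts (i) and (ii) of Conjecture \ref{mainconj}, and the paper explicitly places it at the level of the Parkin--Shanks conjecture. A Chebotarev computation on the image of a mod-$2$ Galois representation, even if carried out, would compute the density of \emph{vanishing traces} along primes, not the odd density of the full coefficient sequence, so the final step of your program does not reduce to a known equidistribution statement. In short: your reformulation is sound and your assessment of the difficulty is candid and essentially correct, but nothing in the proposal constitutes progress toward a proof, which is consistent with the statement remaining a conjecture in the paper.
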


When $t$ is odd, we ask the following.
\begin{question} Is it true that $\delta^{(t)} > 0$ for all $t\ge 5$ odd?
\end{question}

In general, when $t \equiv 3 \pmod{4}$, $g_t(n)$ seems much more likely to have congruences. Up to $t=99$, every such $t$ presents some even progression, and we conjecture that the density of all these series is less than 1/2.  We did find that $t=215$ has no even progressions $An+B$ for $0<A < 8000$; its experimental density is 0.499271. Consistently with Conjecture \ref{mainconj}, this might suggest either density 1/2 and no congruences at all, or, if one exists, that the first such congruence is of rather large modulus.

Interestingly, when $t \equiv 1 \pmod{4}$, $t\geq 29$, computations suggest the opposite behavior; namely, we found no congruences at all for $G_t(q)$.  We are prepared to state the following conjecture.

\begin{conjecture} Let $t \equiv 1 \pmod{4}$, $t \geq 29$. Then  there exists no arithmetic progression $An+B$, $A \neq 0$, such that $g_t(An+B)$ is always even.  Moreover (which is equivalent under Conjecture \ref{mainconj}), we have $\delta^{(t)} = 1/2$.
\end{conjecture}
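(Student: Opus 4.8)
The statement has two halves that Conjecture~\ref{mainconj} renders equivalent, so in principle it suffices to establish either one unconditionally and then invoke the master conjecture for the other. The plan is to concentrate the unconditional effort on the non-existence of an always-even progression, since the density assertion $\delta^{(t)}=1/2$ is of the same difficulty as the outstanding density problems for $1/f_1$-type eta-quotients: it is a positive-density statement for a quotient carrying $f_1$ in the denominator, squarely in the circle of ideas surrounding Parkin--Shanks, and not presently within reach. Once the progression statement is in hand, one recovers $\delta^{(t)}=1/2$ from parts (ii)--(iv) of Conjecture~\ref{mainconj}.

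For the progression statement, the natural first move is the elementary reduction to the partition function. Magnifying the mod~2 Jacobi identity by $q\mapsto q^t$ gives $f_t^3\equiv\sum_{m\ge0}q^{t\binom{m+1}{2}}$, whence
\[
g_t(n)\equiv\sum_{m\ge0,\ t\binom{m+1}{2}\le n}p\!\left(n-t\binom{m+1}{2}\right)\pmod 2.
\]
This mirrors Theorem~\ref{dminus1}, where letting the exponent grow reduced everything to Radu's theorem \cite{RaduNoEvens}. Here, however, $t$ is \emph{fixed}: only the bounded initial range $0\le n<t$ reproduces $p(n)$ exactly (since then solely $m=0$ contributes), so the growing-initial-segment trick of Theorem~\ref{dminus1} is unavailable, and the convolution above cannot be inverted within a single residue class $An+B$. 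A direct reduction to Radu therefore fails, and I would pass to modular forms instead: a suitable shift $q^{c}G_t$ is the reduction mod~2 of a (half-)integral weight form on some $\Gamma_0(N)$ with $N$ built from the prime $2$ and the primes dividing $t$, exactly as in the proofs of Theorems~\ref{g19}, \ref{g147}, and \ref{t21}. Sieving to $An+B$ by the operators $U$, $V$ and twists by quadratic characters sends this form to another form in a computable space, and the hypothesis $g_t(An+B)\equiv0$ for all $n$ says precisely that the sieved form is $\equiv0$. The aim is then to rule this out by showing that the Hecke eigensystem attached to $G_t$ modulo~2 (equivalently its Shimura lift / mod~2 Galois representation) is non-degenerate, so that its coefficients cannot vanish identically on any progression.

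The crux---and the reason the statement remains conjectural---is exactly this non-degeneracy, which is equivalent to the non-lacunarity of $G_t$ modulo~2. CMSZ (Theorem~\ref{cot}) yields lacunarity only for $t\le3$ and says nothing in the range $t\ge29$; proving $\delta^{(t)}>0$ there amounts to showing the relevant form is not of CM / theta-degenerate type, a positive-density conclusion of precisely the kind currently inaccessible for $1/f_1$-type series. I would expect the dichotomy between $t\equiv1$ and $t\equiv3\pmod 4$ to be explained through the arithmetic of the discriminant $-24t$ controlling the theta decomposition of $f_t^3/f_1$: for $t\equiv3\pmod4$ this decomposition admits quadratic-residue obstructions of the sort harvested in Theorem~\ref{longlist}, producing genuine even progressions, whereas for $t\equiv1\pmod4$, $t\ge29$, one expects no such obstruction to persist on any progression. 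Turning this expectation of ``no obvious obstruction'' into a proof of ``no obstruction on any progression,'' uniformly in $t$, is the main obstacle, and is what places the full statement beyond the elementary and Sturm-bound arguments used elsewhere in this paper.
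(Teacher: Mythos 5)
The statement you were asked to prove is one of the paper's open conjectures: the authors offer no proof of it, only computational evidence (they report finding no even progressions $An+B$ with $0<A<8000$ for such $t$, and an experimental odd density near $1/2$ in examples like $t=215$, which actually belongs to the $t\equiv 3\pmod 4$ family). Your write-up correctly recognizes this and does not claim a proof; your reduction via $f_t^3\equiv\sum q^{t\binom{m+1}{2}}$, your explanation of why the growing-initial-segment trick of Theorem \ref{dminus1} fails for fixed $t$, and your identification of non-lacunarity (inaccessible beyond the CMSZ range of Theorem \ref{cot}) as the essential obstruction are all sound and consistent with how the paper frames the difficulty. In short: there is nothing in the paper to compare against, your assessment that the statement is currently out of reach matches the authors' own treatment, and your sketched modular-forms strategy is a reasonable but necessarily incomplete roadmap rather than a proof.
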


To the best of our knowledge, this conjectural behavior of certain eta-quotients -- namely, their lack of congruences -- has so far only been established for the partition function $p(n)$.

Among candidate progressions for congruences, we note with interest that many $G_t(q)$, for $t \equiv 3 \pmod{4}$, seem to possess even progressions in which the modulus is a power of 2.  An early example is $g_{19}(16n+11) \equiv 0 \pmod{2}$.

Other congruence families that appear to exist, such as $g_5(98n+(42,56,84)) \equiv 0$, do not follow from the type of arguments employed in this manuscript.  We expect many of these are susceptible to the mass-formula analysis of Ballantine, Merca, and Radu. \cite{BallMercRadu}.

The argument 338 shows up in the study of $g_9(n)$ in Theorem \ref{longlist}.  Since no progression $An+B$ is identically even in the partition function, and initial segments of $g_t(n)$ match the partition function for longer intervals as $t \rightarrow \infty$, it follows that any given argument can only show up for a finite number of values of $t$.  For instance, $g_t(10n+j)$ is even for all $n$ and some value of $j$ only for $t=5$ and $t=25$, and no other $t < 39$.  Since the  sequences $p(10n+k)$ contain an odd element for all $0 \leq k \leq 9$ at or before $n=39$, no other $G_t(q)$ possesses such a progression.

For $t$ odd, extensive computations suggest that candidate progressions $An+B$ may all have an even modulus $A$.

\begin{conjecture}\label{evenmodulus} Let $t$ be odd. If $g_t(An+B) \equiv 0 \pmod{2}$ for all $n$, then $A$ is even.
\end{conjecture}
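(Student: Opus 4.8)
The plan is to treat this not as an isolated assertion but as the odd-modulus shadow of the master conjecture (Conjecture~\ref{mainconj}), and to reduce it, via the structural congruence underlying Theorem~\ref{inherited}, to a non-vanishing statement for a function whose behaviour is already controlled. Recall that modulo $2$ one has $G_t \equiv (f_t/f_1)\,f_{2t} = b_t\,f_{2t}$, where $f_{2t}$ is a power series in $q^{2t}$ (hence supported on even exponents) with constant term $1$, so the relation is invertible. Writing $1/f_{2t} = \sum_{n\ge0} p(n)\,q^{2tn}$ yields the two-way convolutions
\[
g_t(N) \equiv \sum_{m\in\Z} b_t\bigl(N - t\,m(3m-1)\bigr),\qquad b_t(N)\equiv \sum_{n\ge0} g_t(N-2tn)\,p(n) \pmod 2 .
\]
The first identity is the recurrence recorded in the proof of Theorem~\ref{inherited}; the second expresses $b_t$ back in terms of $g_t$ convolved with the partition function. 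The strategy is to feed a hypothetical all-even progression $g_t(An+B)\equiv0$, with $A$ odd, into these relations and show that the vanishing propagates across residue classes modulo $A$ in a manner incompatible with a known non-vanishing theorem.

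Concretely, I would first dispose of the arithmetic of the shifts. Since $A$ is odd we have $\gcd(2,A)=1$, so the shift lattice $\{2tn\}$ in the convolution for $b_t$ is \emph{not} confined to a single class modulo $A$; after reducing to the case $\gcd(t,A)=1$ (the general case to be handled by passing to $A/\gcd(t,A)$ and magnifying), the shifts $2tn$ sweep out every residue modulo $A$. This is the crux of why odd $A$ should be forbidden: the assumed vanishing of $g_t$ on the single class $\equiv B$ cannot be insulated from the remaining classes, so one expects it to force a collapse onto a full progression for $b_t$, and from there --- via the initial-segment matching of $g_t$ with the partition function exploited in Theorems~\ref{g19} and \ref{dminus1} --- an identically even progression for $p$ itself, which is ruled out by Radu~\cite{RaduNoEvens}. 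I would implement this transfer using the finite root-of-unity section $\frac{1}{A}\sum_{j=0}^{A-1}\zeta^{-jB}\,G_t(\zeta^j q)$, with $\zeta$ a primitive $A$-th root of unity, tracking how the even support of $f_{2t}$ interacts with these twists.

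The main obstacle is that the statement is simply \emph{false} for the small lacunary cases, and recognising this dictates the hypotheses any correct proof must carry. For $t=1$ one has $g_1\equiv f_2$, whose odd coefficients sit at the quadratic values $m(3m-1)$; the elementary quadratic-residue argument giving $f_2(11n+1)\equiv0$ noted in the introduction produces an odd-modulus all-even progression, and the $3$-core partitions do the same for $t=3$. Thus the conjecture can only hold once $g_t$ is genuinely non-lacunary, i.e. $\delta^{(t)}>0$, which for odd $t$ is itself open --- it is precisely the content of the Question preceding this conjecture. Consequently I do not expect an unconditional proof: the realistic targets are (i) a proof \emph{conditional} on $\delta^{(t)}>0$ together with the ``all-or-nothing'' form of Conjecture~\ref{mainconj} (density $0$ or $1/2$), under which $\delta^{(t)}>0$ forces $\delta^{(t)}=1/2$ and, by clause (ii), density $1/2$ on every progression, so that the statement becomes vacuous; or (ii) an unconditional proof restricted to the range of odd $t$ in which non-lacunarity can be certified by other means.

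The hardest technical point, even granting non-lacunarity, is to make the residue-class propagation of the second paragraph rigorous when $A$ shares a factor with $t$ or with the conductor of the relevant quadratic twist, since there the coprimality that drives the argument degenerates and the section operator no longer cleanly separates the classes. I would attack this by a descent on the odd part of $A$, at each stage peeling off a prime $\ell \mid A$ and comparing the $\ell$-sections of $G_t$ against those of $b_t$ through the convolution above; the goal is to show that an all-even class for $g_t$ modulo $\ell^a$ cannot coexist with the non-lacunarity of $b_t$. Establishing that this descent terminates without degenerating is, I expect, where the genuine difficulty lies, and it is likely the precise place where full strength of the master conjecture, rather than merely $\delta^{(t)}>0$, would have to be invoked.
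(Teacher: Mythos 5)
The statement you were asked to prove is labelled \emph{Conjecture} in the paper and is left entirely open there: the authors offer it on the strength of ``extensive computations'' in the Future Directions section and supply no proof, so there is nothing in the paper to compare your argument against. Your proposal, by its own admission, does not close the gap either. The convolution identities you write down are correct ($G_t \equiv b_t \cdot f_{2t}$ gives both $g_t(N) \equiv \sum_m b_t(N - t\,m(3m-1))$ and $b_t(N) \equiv \sum_n g_t(N-2tn)\,p(n)$, the first being exactly the recurrence in the proof of Theorem~\ref{inherited}), but the central step --- that vanishing of $g_t$ on a single class modulo an odd $A$ must ``propagate'' through the convolution to force an all-even progression for $b_t$ or for $p(n)$ --- is asserted, not argued. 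A convolution that mixes all residue classes does not transmit vanishing from one class to another without substantial additional input, and you yourself end by saying the argument would require the full master conjecture. So what you have is a research programme, not a proof, and it should be presented as such.

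That said, one observation in your third paragraph is genuinely valuable and correct: the conjecture as literally stated fails for $t=1$ and $t=3$. The paper itself notes that $g_1 \equiv f_2$ is supported on the exponents $m(3m-1)$ and that $f_2$ has even coefficients on $11n+1$, an odd modulus; the same quadratic-residue analysis applied to the octagonal numbers $3m^2-2m$ produces odd-modulus even progressions for $g_3$. The authors presumably intend the conjecture for $t \geq 5$ odd (consistent with the Question immediately preceding it), but you are right that the hypothesis ``$t$ odd'' alone is insufficient, and any eventual proof must either exclude the lacunary cases or be conditional on $\delta^{(t)}>0$. This is a worthwhile correction to record; it is not, however, a substitute for a proof of the (suitably restricted) statement, which remains open.
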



Regarding the pure eta-powers, we offer the following questions and considerations.

The remaining cases of Conjecture \ref{allprimes} not covered by Theorem \ref{primeclasses}, namely when $p \equiv 1 \pmod{24}$, certainly bear investigation. The first such prime is 73, and experimental calculations suggest several values of $t$ are indeed $73^2$-even with various bases, from $f_1^5$ with base 1110 to $f_1^{69}$ with base 4660.

Computations also seem to indicate that many additional progressions exist for various $t$, though they are not provable directly by the arguments of this paper.  A notable example is $f_1^{13}$, which appears to be $p^2$-even for \emph{some} primes 1 mod 6 in addition to the 5 mod 6 proved above; however, it cannot be so for the same reasons as in the proof of Theorem \ref{primeclasses}.

There are also numerous cases where the full behavior of being $p^2$-even does not hold because exactly half of the required progressions are identically even.  For instance, $c_7(49n+B)$ appears to be even for $B \in \{21, 28, 42\}$, which are three instead of six of the required progressions.  Similarly, $c_{195} \left(73^2n+B\right)$ seems even in 36 of the necessary 72 progressions.  A different approach would be necessary to establish these claims, but possibly there is a unifying underlying reason that may be employed to prove an infinite class of such cases.

We conclude by posing the following questions.

\begin{question}
Fixing a prime $p$ not congruent to 1 mod 24, Theorem \ref{primeclasses} yields an infinite class of $t$ for which $f_1^t$ is $p^2$-even, but the class is of exponential growth.  Is $f_1^t$ $p^2$-even for a positive proportion of odd $t$, and is the same true also for a positive proportion of primes $p \equiv 1 \pmod{24}$?
\end{question}

\begin{question}
Fixing $t$, Theorem \ref{primeclasses} gives a positive proportion of primes $p$ for which $f_1^t$ is $p^2$-even as long as $t$ is of the classes covered, but this is a density 0 set in the integers. Hence we ask: is $f_1^t$ $p^2$-even for a positive proportion of primes $p$, for \emph{all} $t$?
\end{question}

\begin{question}
Chen's theorem in \cite{Chenktuple} yields even progressions for all $c_{3k}$, but the moduli are large, with many prime factors. Theorem \ref{primeclasses} applies to fewer $c_t$, but the resulting arithmetic progressions have relatively small moduli $p^2$.  Does there exist an even arithmetic progression, other than for $f_1$ or $f_1^3$, of the form $pn+B$ for some prime $p$?
\end{question}

\begin{problem} Extend the analysis to eta-powers that contain three 1s in their binary expansions.
\end{problem}

\section{Acknowledgements}
We thank an anonymous reviewer for valuable comments on an earlier version of this paper, and the current reviewers for substantive suggestions on presentation and organization.  The second author was partially supported by a Simons Foundation grant (\#630401).

\section{Data}

This paper did not generate or analyze any data.

\section{Conflict of Interest Statement}

On behalf of all authors, the corresponding author states that there is no conflict of interest.


\begin{thebibliography}{99}

\bibitem{Ahlgren1} S. Ahlgren: \emph{The partition function modulo composite integers M}, Math. Ann. \textbf{318} (2000), 795--803.

\bibitem{Andr} G. Andrews: \lq \lq The Theory of Partitions,'' Encyclopedia of Mathematics and its Applications, Vol. II. Addison-Wesley, Reading, Mass.-London-Amsterdam (1976).

\bibitem{AndrCKI} G. Andrews: \emph{Singular overpartitions}, Int. J. Number Theory \textbf{5} (11) (2015), 1523--1533.

\bibitem{Aricheta2} V.M. Aricheta: \emph{Congruences for Andrews' $(k, i)$-singular overpartitions}, Ramanujan J. \textbf{43} (2017), 535--549.

\bibitem{ArichetaEtAl} V.M. Aricheta, J. Dimabayao, and H.J. Shi: \emph{Density results for the parity of $(4k, k)$-singular overpartitions}, J. Number Theory \textbf{262} (2024), 354--370.


\bibitem{BallMercRadu} C. Ballantine, M. Merca, and C.-S. Radu: \emph{Parity of 3-regular partition numbers and Diophantine equations}, Rev. R. Acad. Cienc. Exactas F\'{i}s. Nat. Ser. A Mat. RACSAM \textbf{119} (105) (2025).


\bibitem{BarmanSingh2} R. Barman and A. Singh: \emph{New density results and congruences for Andrews' singular overpartitions}, J. Number Theory \textbf{229} (2021), 328--341.



\bibitem{Chenktuple} S.-C. Chen: \emph{Congruences for the number of $k$-tuple partitions with distinct even parts}, Discrete Math. \textbf{313} (2013), 1565--1568.

\bibitem{Chen} S.-C. Chen: \emph{On the density of the odd values of the partition function and the $t$-multipartition function}, J. Number Theory \textbf{225} (2021), 198--213. 

\bibitem{Chen2} S.-C. Chen: \emph{Partition congruences and the vanishing coefficients of products of theta functions}, Ramanujan J. \textbf{62} (2023), 1125--1144. 


\bibitem{CMSZ} T. Cotron, A. Michaelsen, E. Stamm, and W. Zhu: \emph{Lacunary Eta-quotients Modulo Powers of Primes}, Ramanujan J. \textbf{53} (2020), 269--284. 



\bibitem{GordonOno} B. Gordon and K. Ono: \emph{Divisibility of certain partition functions by powers of primes}, Ramanujan J. \textbf{1} (1997) 25--34.

\bibitem{Hirsch2} M. Hirschhorn: ``The power of q,'' Developments in Mathematics \textbf{49}, Springer (2017).

\bibitem{Hirsch} M. Hirschhorn: \emph{Parity results for certain partition functions}, Ramanujan J. \textbf{4} (2000), 129--135.



\bibitem{JKZ} S. Judge, W.J. Keith, and F. Zanello: \emph{On the density of the odd values of the partition function}, Ann. Comb. \textbf{22} (2018), no. 3, 583--600.

\bibitem{JZ} S. Judge and F. Zanello: \emph{On the density of the odd values of the partition function, II: An infinite conjectural framework}, J. Number Theory \textbf{188} (2018), 357--370.

\bibitem{KZ} W.J. Keith and F. Zanello: \emph{Parity of the coefficients of certain eta-quotients}, J. Number Theory \textbf{235} (2022), 275--304. 

\bibitem{KZ2} W.J. Keith and F. Zanello: \emph{Parity of the coefficients of certain eta-quotients, II: The case of even-regular partitions}, J. Number Theory \textbf{251} (2023), 84--101. 



\bibitem{Landau} E. Landau: \emph{\"{U}ber die Einteilung der positiven ganzen Zahlen in vier Klassen nach der Mindestzahl der zu ihrer additiven Zusammensetzen erforderlichen Quadrate}, Arch. Math. Phys. (3) \textbf{13} (1908), 305--312.

\bibitem{LiYao} X. Li and O.X.M. Yao: \emph{New infinite families of congruences for Andrews (K, I)-singular overpartitions}, Quaest. Math. \textbf{41} (2018), 1005--1019.



\bibitem{NaikaGireesh} M.S.M. Naika and D.S. Gireesh: \emph{Congruences for Andrews' singular overpartitions}, J. Number Theory \textbf{165} (2016), 109--130.

\bibitem{Ono2} K. Ono: \emph{On the parity of the partition function in arithmetic progressions}, J. Reine Angew. Math. \textbf{472} (1996), 1--15.

\bibitem{OnoWeb} K. Ono: ``The Web of Modularity: Arithmetic of the Coefficients of Modular Forms and $q$-series,'' CBMS Regional Conference Series in Mathematics, Amer. Math. Soc. \textbf{102}, Providence, RI (2004).


\bibitem{PaSh} T.R. Parkin and D. Shanks: \emph{On the distribution of parity in the partition function}, Math. Comp. \textbf{21} (1967), 466--480.

\bibitem{PoreFathima} U. Pore and S.N. Fathima: \emph{Some congruences modulo 2, 8 and 12 for Andrews' singular overpartitions}, Note Mat. \textbf{38} (2018), 101--114.

\bibitem{RaduNoEvens} C.-S. Radu: \emph{A proof of Subbarao's conjecture}, J. Reine Angew. Math. \textbf{672} (2012), 161--175.


\bibitem{Robbins} N. Robbins: \emph{On $t$-core partitions}, Fibonacci Quarterly \textbf{38} (2000), 39--48.

\bibitem{Rosen} K. Rosen: ``Elementary Number Theory and its Applications,'' Fourth Ed., Addison-Wesley, Reading, MA (2000), xviii+638 pp..

\bibitem{Serre} J.-P. Serre: \emph{Divisibilit\'{e} des coefficients des formes modulaires de poids entier}, C.R. Acad. Sci. Paris A \textbf{279} (1974), 679--682.

\bibitem{Xia} E.X.W. Xia: \emph{New congruences modulo powers of 2 for broken 3-diamond partitions and 7-core partitions}, J. Number Theory \textbf{141} (2014), 119--135.

\bibitem{Zanello} F. Zanello: \emph{Deducing the positive odd density of $p(n)$ from that of a multipartition function: An unconditional proof}, J. Number Theory \textbf{229} (2021), 277--281. 


\end{thebibliography}
\end{document}